\documentclass[a4paper,12pt,reqno]{amsart}

\usepackage[english]{babel}
\usepackage[T1]{fontenc}
\usepackage[utf8]{inputenc}

\usepackage[nobysame,initials]{amsrefs}
\usepackage{bbm}
\usepackage{enumitem}
\usepackage[top=3cm,
  bottom=3cm,
  left=2cm,
  right=2cm]{geometry}
\usepackage{graphicx}
\usepackage[hidelinks]{hyperref}
\usepackage{setspace}
\usepackage{upgreek}
\usepackage{url}
\usepackage{float}

\DefineSimpleKey{bib}{primaryclass}{}
\DefineSimpleKey{bib}{archiveprefix}{}

\BibSpec{arXiv}{
+{}{\PrintAuthors}{author}
+{,}{ \textit}{title}
+{}{ \parenthesize}{date}
+{,}{ arXiv preprint }{eprint}
+{,}{ primary class }{primaryclass}
+{.}{}{transition}
}

\usepackage{amsmath}
\usepackage{amssymb}
\usepackage{amsthm}
\usepackage{mathtools}
\usepackage{mismath}
\usepackage{nicematrix}

\renewcommand{\R}{\mathbb R}
\newcommand{\mbf}[1]{\mathbf{#1}}
\newcommand{\one}[1]{\mathbf{1}_{#1}}
\newcommand{\ind}[1]{\mathbbm{1}_{#1}}

\newcommand{\dZ}[1]{\mathrm{Z}\left(#1\right)}
\newcommand{\Vol}[2]{\mathrm{Vol}_{#1}\left(#2\right)}
\newcommand{\Laplace}[2]{\mathcal{L}\left[#1\right]\left(#2\right)}
\renewcommand{\Res}[2]{\mathrm{Res}\left(#1;#2\right)}

\newtheorem{theorem}{Theorem}
\newtheorem{proposition}[theorem]{Proposition}
\newtheorem{lemma}[theorem]{Lemma}

\numberwithin{equation}{section}

\title{Minimal central slices of the regular simplex} 

\author{Gergely Ambrus and Barnabás Gárgyán}

\date{\today}

\begin{document}

\begin{abstract}
We prove that minimal-volume hyperplane sections of the regular simplex through its centroid are parallel to a facet. The proof combines variational methods with Fourier-analytic techniques and zero-diminishing arguments to show that every critical normal vector has at most three distinct non-zero coordinates. Analysis of the two- and three-value cases then yields the sharp lower bound.
\end{abstract}

\maketitle

\section{Introduction}\label{sec:intro}

An $n$-dimensional regular simplex is the convex hull of $n+1$ affinely independent points in $\R^{n+1}$ whose pairwise distances are all equal. The canonical example is the {\em standard regular simplex} $\Delta_n$, defined as the convex hull of the standard basis vectors $\{\mbf e_i\}_{i=1}^{n+1}$ in $\R^{n+1}$. We consider its {\em central hyperplane sections}, that is, sets of the form $H\cap \Delta_n$, where $H\subset\R^{n+1}$ is a hyperplane passing through the centroid $\frac1{n+1}\sum_{i=1}^{n+1}\mbf e_i$ of $\Delta_n$. In the present paper, we resolve the long-standing problem of determining the smallest possible $(n-1)$-dimensional volume of such a section.

We represent each central hyperplane as $H=\mbf a^\perp$, where $\mbf a = (a_1, \ldots, a_{n+1})\in\R^{n+1}$ is a unit vector perpendicular to $(1,1,\ldots,1)$. The latter condition ensures that $H$ contains the centroid of $\Delta_n$; see Figure~\ref{fig:simplex}. Throughout this work, $\mbf a$ will denote such a vector.

\begin{figure}[h]
\centering
\includegraphics[scale=.8]{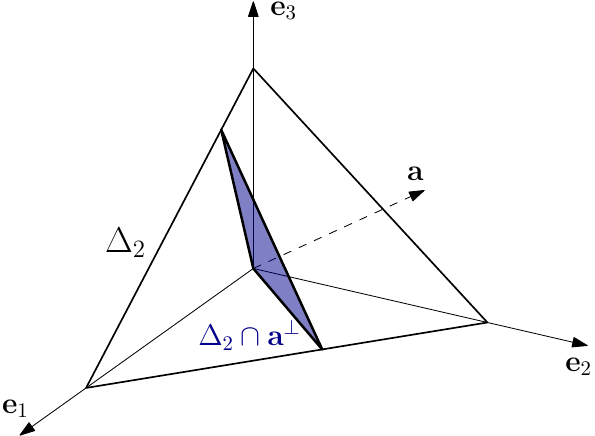}
\caption{Embedding $\Delta_2$ into $\R^3$ and slicing it with the plane $\mbf a^\perp$.}
\label{fig:simplex}
\end{figure}

The \emph{simplex section function} is defined by
\begin{equation}\label{eq:Vsimplex}
V(\mbf a):=\Vol{n-1}{\Delta_n\cap \mbf a^\perp}.
\end{equation}
Our aim is to minimise \eqref{eq:Vsimplex} over all admissible vectors $\mbf a$.

It has long been conjectured that the minimal central hyperplane sections are precisely those parallel to a facet of $\Delta_n$, with corresponding volume
\[
V_{\mathrm{min}}
=
\frac{\sqrt{n+1}}{(n-1)!}
\Big(\frac{n}{n+1}\Big)^{n-\frac12}.
\]
The normal vectors of these facet-parallel sections are obtained from the vector
\[
\mbf a_{\min}:=
\bigg(
\sqrt{\frac{n}{n+1}},
-\frac{1}{\sqrt{n(n+1)}},
\ldots,
-\frac{1}{\sqrt{n(n+1)}}
\bigg)
\]
by permuting the coordinates and multiplying by $-1$.

\medskip
The main result of this paper confirms the conjecture.

\begin{theorem}\label{th:simplex}
For every unit vector $\mbf a\in\R^{n+1}$ with $\sum_{j=1}^{n+1}a_j=0$, we have
\begin{equation}\label{eq:conj}
V(\mbf a)\geq V_{\mathrm{min}}.
\end{equation}
Equality holds if and only if $\mbf a$ is obtained, up to permutation of coordinates and multiplication by $-1$, from the vector $\mbf a_{\min}$.
\end{theorem}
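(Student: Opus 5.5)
The proof rests on an explicit formula for $V(\mbf a)$. Write it via the Fourier/Laplace representation of the density of $\langle \mbf a, X\rangle$, where $X$ is uniform on $\Delta_n$. For $\mbf a$ with distinct nonzero coordinates one gets the divided-difference formula
\[
V(\mbf a)=\frac{\sqrt{n+1}}{(n-1)!}\sum_{j:\,a_j>0}\frac{a_j^{\,n-1}}{\prod_{k\neq j}(a_j-a_k)},
\]
equivalently $V(\mbf a)=\frac{\sqrt{n+1}}{2\pi(n-1)!}\int_{\R}\prod_{j}(1+ i a_j t)^{-1}\,dt$ up to normalisation. It extends continuously to coincident coordinates. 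Since $V$ is continuous on the compact set $S=\{\mbf a:\|\mbf a\|=1,\ \sum a_j=0\}$, a minimiser exists.

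At a minimiser, the Lagrange condition reads $\partial_j V(\mbf a)=\lambda a_j+\mu$ for all $j$, wherever $V$ is differentiable. Points where some $a_j=0$ must be handled separately by a one-sided variation. Using the integral representation, $\partial_j V$ is $-\int it\,(1+ia_jt)^{-1}\prod_k(1+ia_kt)^{-1}\,dt$. So $\partial_j V=g(a_j)$ for a single function $g(x)=\int \frac{-it}{(1+ixt)}\Phi(t)\,dt$, with $\Phi=\prod_k(1+ia_kt)^{-1}$. The function $g$ is a Stieltjes/Laplace-type transform of a density: in $x>0$ and $x<0$ separately it equals a one-sided Laplace transform of the distribution of the section. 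The criticality condition says every nonzero coordinate $a_j$ is a root of $g(x)-\lambda x-\mu$.

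The key step is a zero-diminishing (Descartes/Pólya–Schoenberg variation-diminishing) argument. A Laplace transform of a kernel has at most as many zeros as the kernel has sign changes. Here the kernel is the density of $\langle\mbf a,X\rangle$ minus the affine term, and it is a log-concave (piecewise polynomial) function. Combining the affine correction with log-concavity, $g(x)-\lambda x-\mu$ should have at most two zeros on each half-line, and total structure limited to three distinct nonzero values. I expect this to be the main obstacle. One must control sign changes after subtracting $\lambda x+\mu$, which is not itself a Laplace transform, and presumably apply differentiation (Rolle) twice to kill the affine part and then count sign changes of $t^2\cdot$(density).

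With at most three distinct nonzero values (and possibly zeros), the problem reduces to finitely many parameters. Suppose $a$ takes values $x$ ($p$ times), $y$ ($q$ times), $z$ ($r$ times) and $0$ ($s$ times), subject to the two linear/quadratic constraints. The formula becomes an explicit one- or two-parameter function with residues at repeated poles. In the two-value case, $V$ is an explicit function of $p$; show by direct monotonicity or convexity that it is minimised at $p=1$ (or $q=1$), giving $\mbf a_{\min}$. In the three-value case, show that any critical point is dominated, for instance by exhibiting a descent direction merging two values, or by a direct inequality comparing to $V_{\min}$. Zero coordinates are excluded by the one-sided derivative computation. Equality analysis follows since strict inequality holds in every non-extremal configuration, and permutation and sign symmetry of $V$ give the stated equality cases.
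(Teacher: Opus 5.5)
Your outline has the same architecture as the paper: Lagrange multipliers on the Fourier integral, a Laplace-transform zero count giving at most three nonzero values, then the two- and three-value cases. The two-value monotonicity in the multiplicity is done essentially as you suggest. However, the two load-bearing steps are not supplied, and the mechanism you propose for the first one would not work.

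\emph{The zero count.} Log-concavity of the density gives no control on its intersections with a concave quadratic. On any interval where a quadratic $p$ is positive and strictly concave, $p+\varepsilon w$ is still positive and concave, hence log-concave, for any smooth $w$ and small $\varepsilon$, so the difference can oscillate arbitrarily often. Your target count is also too weak: two zeros on each half-line allows four distinct nonzero values, not three. You also never determine $\lambda$, although its sign decides whether the subtracted polynomial is concave. The missing ingredients are these:
\begin{itemize}
\item Euler's identity for the $(-1)$-homogeneous $\sigma$ gives $\lambda=-\tau$. Hence every coordinate is a zero of $K(x)=f_{\Sigma_{n+1}+xX}(0)-(-\tau x^2+\mu x+\tau)$.
\item On each half-line, $K$ is, up to a positive factor, the Laplace transform at $1/|x|$ of $\Phi=f_{\Sigma_{n+1}}-p$, with $p(t)=-\frac{\tau}{2}t^2-\mu t+\tau$, restricted to the opposite half-line. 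Note that the subtracted term is a quadratic, not an affine function.
\item One then peels off the exponentials one at a time: $f_{\Sigma_m}-q=(f_{\Sigma_{m-1}}-\widetilde q)*f_{a_mX}$, with $\widetilde q=a_mq'+q$ having the same leading term. After multiplication by a positive exponential this convolution is a cumulative integral vanishing at an infinite end, so Rolle shows the number of zeros on all of $\R$ does not increase.
\item This ends at $f_{a_1X_1+a_2X_2}-p_2$ with $a_1>0>a_2$: a function convex on each half-line minus a concave quadratic. It therefore has at most four zeros on $\R$, one of which is the trivial zero at the origin.
\end{itemize}
It is this global count, not a per-half-line one, that yields three nonzero values.

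\emph{The three-value case.} Saying ``exhibit a descent direction merging two values, or a direct inequality'' restates the goal rather than proving it. The paper's argument runs as follows.
\begin{itemize}
\item Fix the multiplicities and parametrise by $x=(b-c)/a\in[0,\alpha/\beta]$.
\item Write $S(x)=\sigma(\mbf a(x))$ via residues in terms of $Q(r)=\sum_k B_kr^k$.
\item Turn the coefficient recurrence into a second-order ODE for $Q$, and then into a Riccati-type equation for $D=(\ln S)'/\varrho$.
\item Deduce that at any interior critical point, $S''$ has the sign of an explicit cubic $P$.
\item Use concavity of $P$ on the interval, together with $P(0)>0$, $S'(0)=0$ and $S''(0)>0$, to rule out interior local minima.
\end{itemize}
Nothing in your proposal replaces this.

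\emph{Zero coordinates.} These cannot be excluded by a first-order variation. For $n\geq4$, $\sigma$ is differentiable at $(\tfrac12,\tfrac12,-\tfrac12,-\tfrac12,0,\ldots,0)$, and this point is a constrained critical point. Block symmetry gives the Lagrange conditions on the nonzero coordinates with $\mu=0$. At a zero coordinate, $\partial_j\sigma=-f'_{\Sigma_{n+1}}(0)=0$ by the symmetry of $\Sigma_{n+1}$. The paper instead inducts on $n$. Deleting the zero coordinates leaves a $d$-coordinate instance whose minimum $(\frac{d-1}{d})^{d-3/2}$ strictly exceeds $(\frac{n}{n+1})^{n-1/2}$. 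This also supplies the strictness needed for the equality statement.
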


\subsection{Terminology and history}
The study of extremal sections of convex bodies is a well-established area of convex geometry. Among the most prominent examples are sections of the cube, the cross-polytope, and $\ell_p$-balls. For related results, see the recent comprehensive survey~\cite{TN-survey}. A standard approach to studying volumes of hyperplane sections combines probabilistic methods with Fourier-analytic techniques. For symmetric convex bodies, this leads to representations of the section volume by real integrals, which can then be analysed using various methods.

The simplex, however, is not centrally symmetric, and the same approach naturally leads to an integral representation involving a complex-valued function. This makes the analysis of extremal sections, and in particular of minimal sections, considerably more delicate, as we shall see below. We first recall the corresponding integral representation for~\eqref{eq:Vsimplex}, following Webb~\cites{Webb,WebbD}.

\smallskip

For a continuous random variable $Y$, we will denote its probability density function by~$f_Y$. Throughout the paper, $X$ will denote an exponentially distributed random variable with parameter $1$, whose probability density function is 
\[
f_X(x) = \e^{-x}\ind{x\geq0}\,,
\]
where $\ind{A}$ denotes the indicator function of the set $A$. The variables $X_1,\ldots,X_{n+1}$ will always be independent copies of $X$. We define $F:\R^{n+1}\to[0,\infty[$ as
\[F(\mbf x)=\prod_{j=1}^{n+1}f_{X_j}(x_j).\]
Then, a change of variables yields
\begin{align*}
	\int_{\mbf a^\perp}F(\mbf x)\di\mbf x&=
	\int_{\mbf a^\perp}\prod_{j=1}^{n+1}f(x_j)\di\mbf x=\\[5pt]&=
	\int_0^\infty \e^{-t\sqrt{n+1}}\,\Vol{n-1}{(t\sqrt{n+1})\Delta_n\cap\mbf a^\perp}\di t=\\[5pt]&=
	\int_0^\infty \e^{-t\sqrt{n+1}}\big(t\sqrt{n+1}\big)^{n-1}\,\Vol{n-1}{\Delta_n\cap\mbf a^\perp}\di t=\\[5pt]&=
	\frac{(n-1)!}{\sqrt{n+1}}\,\Vol{n-1}{\Delta_n\cap\mbf a^\perp},
\end{align*}
accordingly,
\begin{equation}\label{eq:Va-int}
V(\mbf a)=\frac{\sqrt{n+1}}{(n-1)!}\int_{\mbf a^\perp}F(\mbf x)\di\mbf x.
\end{equation}
Notice that the integral on the right-hand side is the probability density function of $\sum_{j=1}^{n+1}a_jX_j$ at zero. Therefore, using characteristic functions (that is, Fourier transform), this integral can be expressed as
\begin{equation}\label{eq:G0}
 \sigma(\mbf a) : = \frac1{2\pi}\int_{-\infty}^\infty\prod_{j=1}^{n+1}\frac{1}{1+\i a_jt}\di t
\end{equation}
where $\i$ stands for the imaginary unit. We note that the integral exists because  $\mbf a$ has at least two nonzero coordinates. Then, by \eqref{eq:Va-int},
\begin{equation}\label{eq:Volume-sigma}
V(\mbf a)=\frac{\sqrt{n+1}}{(n-1)!}\,\sigma(\mbf a).
\end{equation}

Bounds for the volume of central sections of the simplex thus can be achieved by maximising $\sigma(\mbf a)$ subject to the constraints  $\sum_{j=1}^{n+1} a_j = 0$ and  $\sum_{j=1}^{n+1} a_j^2 = 1$. The global maximum was determined by Webb~\cite{Webb} in 1996, who provided two different arguments to show that $\sigma(\mbf a)$ is at most $\frac1{\sqrt2}$, and that this bound is tight. This implies that central sections that contain $n-1$ vertices of $\Delta_n$ have maximal volume, see \cite{Webb}*{Theorem 4}. Recently, Brazitikos and Pandis~\cite{BPg26}*{Theorem 3.1} proved that all local maxima have exactly one negative coordinate, up to changing the orientation of $\mbf a$. Consequently, they gave an alternative proof for determining the global maxima, and also obtained the global minima subject to this strong sign constraint.

The first lower bound on $V(\mbf a)$ was established in 2013 by Brzezinski~\cite{Brz}*{Theorem 1.1}, who proved that 
\[V(\mbf a) \geq \frac1{2\sqrt3}\Big(\frac{{n+1}}{n}\Big)^{n-\frac12} \, V_{\min}.\]
As $n  \to \infty$, the multiplicative factor converges to $0.7847$.

More recently, Tang~\cite{Tang} made substantial progress towards Theorem~\ref{th:simplex} by establishing an asymptotically sharp lower bound: every central section has volume at least 
\[
\frac1\e\Big(\frac{n+1}{n}\Big)^{n-\frac12} \,V_{\min}.
\]
Note that the multiplicative factor converges to $1$ as $n\to\infty$. His argument is based on transforming $\sigma(\mbf a)$ to a contour integral whose integrand is real-valued and positive along the contour, which allows for estimating it from below. 

In 2017, Dirksen \cites{Dirksen,DirksenD} claimed to verify Theorem~1 in dimensions up to $4$ in \cite{Dirksen}*{Theorem~1.3}. His argument relies crucially on \cite{Dirksen}*{Proposition~3.3}, which asserts that balancing the negative coordinates of the normal vector $\mbf a$ decreases the volume of the corresponding section. There appears, however, to be a gap in the proof of this proposition, stemming from an inequality that does not hold in the required generality\footnote{
The error in~\cite{Dirksen} occurs on p.~2576, line~$-13$, where the factor $\prod_{k=1,k\neq j}^P\frac1{1-\frac{a_k}{a_j}}$ may be negative; consequently, the asserted inequality does not hold in general.}. Thus, as far as we are aware, Proposition~3.3, and consequently the claimed verification of Theorem~1 in dimensions up to $4$, are not established by the argument given there. Numerical calculations nevertheless provide some evidence that the underlying assertion may hold.

Another natural approach to the extremal problem is to characterise the critical sections of $\Delta_n$, namely those whose normal vectors are critical points of the simplex section function~\eqref{eq:Vsimplex}. Filliman~\cite{Filli} proposed that for a critical central section $P^*$, its centroid should coincide with that of $\Delta_n$ and each facet of $P^*$ should be normal to the line joining these centroids. He indicated that these conditions would imply that the minimal two-dimensional sections are equilateral triangles. Webb later showed in his thesis~\cite{WebbD} that the proposed conditions do not hold in full generality: the central section  of $\Delta_3$ with normal vector 
\[\frac 1 4 \big(-1,-1, 1+ (\sqrt{33}+5)^{\frac12},1-(\sqrt{33}+5)^{\frac12} \big)\]
is a critical section that violates these criteria. Had Filliman's centroid conditions held for all critical sections, Theorem~\ref{th:simplex} would have followed rather directly; see \cite{WebbD}*{Chapter~4} for details.  This may explain why Filliman \cite{Filli}*{\S3, Comment~(h)} expected Theorem~1 to hold. Webb's example nevertheless shows that the geometry of critical sections is more subtle, and that their characterisation requires a more refined analysis.

One can also obtain analytic conditions on critical sections by applying the method of Lagrange multipliers to \eqref{eq:Vsimplex}, viewed as a function of the coordinates $\{a_j\}_{j=1}^{n+1}$ subject to the conditions $\sum_{j=1}^{n+1}a_j=0$ and $\sum_{j=1}^{n+1}a_j^2=1$. This approach serves as a fundamental tool in studying central as well as non-central sections of the the simplex.  Regarding the latter, sections of globally maximal volume that are close to the centroid of the simplex were described by Brazitikos and Pandis~\cite{BPg26} while sections relatively far from the centroid by König~\cites{König21,König23}, who also investigated sections of locally extremal volume at certain distances. Specifically, he showed that $\mbf a_{\min}$ is \emph{local} minimiser of $V(\mbf a)$; see \cite{König21}*{Proposition 1.3 \emph{(b)}}. In the case of $\Delta_2$ and $\Delta_3$, he determined all extremal hyperplane sections at a fixed distance from the centroid; see \cite{König23}*{Propositions~1.2 and~1.3}. In particular, these results verify Theorem~\ref{th:simplex} in the corresponding dimensions.

It is also possible to evaluate the integral~\eqref{eq:G0} explicitly. When the coordinates of $\mbf a$ are pairwise distinct and nonzero, Webb~\cite{Webb} obtained the explicit expression
\begin{equation}\label{eq:interpol}
\sigma(\mbf a) = \frac12\sum_{k=1}^{n+1}\frac1{\abs{a_k}}\prod_{j\neq k}\frac{a_k}{a_k-a_j}
\end{equation}
by using contour integration. Repeated or zero coordinates can be handled by first combining the corresponding terms and then passing to the appropriate limit. This makes the explicit formula increasingly cumbersome in higher dimensions. For illustrations of this approach, see the works of König~\cites{König21,König23}.

The study of volumes of simplex sections is equivalent to estimating the density function of weighted sums of independent exponential random variables at a prescribed location. Thus, the problem studied here has strong probabilistic connections. For an illustration of this scheme, see~\cite{Tang}, and~\cite{MRTT} that derives estimates for moments of log-concave random variables motivated by Webb's result on maximal sections. We also note that, similarly  to the present approach, Brazitikos and Pandis \cite{BPg25} proved Hunter's conjecture by using a probabilistic representation and reducing the study to extrema with at most three coordinate values.

\subsection{Ingredients of the proof}
        
The proof of Theorem~\ref{th:simplex} applies the method of Lagrange multipliers directly  to the \emph{integral formula}~\eqref{eq:G0}, rather than to \eqref{eq:interpol}. Its crux is the following theorem, established in Section~\ref{sec:crits} using zero-diminishing properties. From now on, {\em section} will always refer to a central hyperplane section of the simplex. 

\addvspace{\medskipamount}
\begin{theorem}\label{th:critic}
The normal vector of any critical section of $\Delta_n$ has at most three distinct nonzero coordinates.
\end{theorem}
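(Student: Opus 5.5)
Write $g_j(t)=\frac{1}{1+\i a_jt}$ and $\Phi(t)=\prod_{j}g_j(t)$. The plan is to differentiate $\sigma$ under the integral sign in \eqref{eq:G0}, which gives
\[
\partial_{a_k}\sigma(\mbf a)=\frac{1}{2\pi}\int_{-\infty}^{\infty}\frac{-\i t}{1+\i a_kt}\,\Phi(t)\di t .
\]
A critical point of $\sigma$ on the constraint set then satisfies Lagrange conditions $\partial_{a_k}\sigma=\lambda+\mu a_k$ for every $k$ with $a_k\neq0$. We should take care near zero coordinates, for example by restricting to the coordinates that are nonzero.

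Next we rewrite the left-hand side as a function of the single variable $s=a_k$. Define
\[
\psi(s):=\frac{1}{2\pi}\int_{-\infty}^{\infty}\frac{-\i t}{1+\i st}\,\Phi(t)\di t .
\]
Since $\frac{-\i t}{1+\i st}=\frac{-\i t}{1+s^2t^2}-\frac{st^2}{1+s^2t^2}$, $\psi$ is a real-valued function of $s\in\R\setminus\{0\}$, real because $\overline{\Phi(t)}=\Phi(-t)$. It can be written as an integral transform $\psi(s)=\int K(s,x)\,d\nu(x)$. A natural way to get such a representation is via the identity $\frac{t}{1+\i st}=\int_0^\infty \cdots e^{-u(1+\i st)}\,du$, which leads to the density $h$ of $Y=\sum_j a_jX_j$. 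This gives, for example for $s>0$,
\[
\psi(s)=c\int_0^\infty e^{-u}\,\partial_y h\big|_{y=su}\,u\,du,
\]
so $\psi$ is a Laplace-type transform of a function built from $h'$. For $s<0$ the analogous formula involves the negative half-line. The critical-point condition says that every nonzero coordinate $a_k$ is a root of $\psi(s)-\lambda-\mu s=0$. The goal is to show that this equation has at most three real nonzero roots.

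The core step is a zero-diminishing argument. Laplace-type kernels like $e^{-x/s}$ are totally positive, so by the variation-diminishing property the number of zeros of $\psi(s)-\lambda-\mu s$ on each half-line $s>0$ and $s<0$ is at most the number of sign changes of the corresponding generating function in $x$. Subtracting $\lambda+\mu s$ corresponds to subtracting simple functions in the transform variable, such as $x^{0}$ and $x^{1}$ terms against the kernel. So we need to bound the sign changes of $h'(y)-\alpha-\beta y$-type combinations, suitably weighted. For this we plan to use that $h$ is the density of a sum of exponentials with mixed signs. It is log-concave, hence unimodal, so $h'$ changes sign exactly once, at the mode. Combining this with the two affine correction terms and with the split into the positive and negative half-lines, we expect the count to come out as at most two roots on one side and at most one on the other, or a similar split totalling three.

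The main obstacle is this final sign-change count. The affine term $\lambda+\mu s$ does not translate cleanly into the transform domain, and the two half-lines interact through the common multipliers $\lambda,\mu$. First we would try to eliminate $\lambda$ and $\mu$ by differentiation, applying $\frac{d^2}{ds^2}$ to $\psi(s)-\lambda-\mu s$. By Rolle's theorem, $k$ roots give at least $k-2$ roots of $\psi''$ on each interval free of singularities. We would then count the zeros of $\psi''$ directly via total positivity and log-concavity of $h$. The care needed at $s=0$, where $\psi$ may be singular and splits the line, is exactly where the bound of three distinct nonzero values has to come from.
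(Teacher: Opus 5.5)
There is a genuine gap. Your setup points in the right direction: the Lagrange equations, the one-variable function $\psi$, a Laplace-type representation through the density $h$ of $Y=\sum_j a_jX_j$, and the variation-diminishing property of the Laplace kernel. But the decisive step, that $\psi(s)-\lambda-\mu s$ has at most three nonzero roots, is only announced (``we expect the count to come out as\dots''). Neither mechanism you propose can deliver it.

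Log-concavity of $h$ says that $h'$ changes sign once. It gives no control on the sign changes of $h'$ minus an affine function with unknown coefficients, and your count is organised half-line by half-line. Differentiating twice removes $\lambda+\mu s$ on each half-line separately. So even if $\psi''$ had constant sign on both half-lines, Rolle would only give at most two roots on each side, four in total. Moreover, $\psi''$ need not have constant sign. For $\mbf a=(1,1,-2)$ (normalising only rescales $\psi$) one finds $\psi(s)=-\frac{2}{9(2+s)}$ for $s>0$ and $\psi(s)=-\frac{1+2s}{9(1-s)^2}$ for $s<0$. Hence $\psi''$ changes sign at $s=-\frac72$, and $\psi'$ even jumps at $s=0$. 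Eliminating the multipliers discards exactly the information that makes three the right number. (Incidentally, your representation should read $\psi(s)=-\int_0^\infty e^{-u}h'(-su)\,du$, without the factor $u$.)

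Three ideas are missing.

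First, the multipliers are not both free. Multiply the Lagrange equations by $a_k$ and sum; since $\sigma$ is homogeneous of degree $-1$, this gives $\mu=-\tau$ with $\tau=\sigma(\mbf a)$.

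Second, multiply by $s$ instead of differentiating. With $g(s)$ the density of $Y+sX$ at $0$, one has $s\psi(s)=g(s)-\tau$, so
\[
K(s):=s\bigl(\psi(s)-\lambda+\tau s\bigr)=g(s)-\bigl(-\tau s^2+\lambda s+\tau\bigr).
\]
This has the trivial root $s=0$. Let $\Theta(t)=h(t)-\bigl(-\frac\tau2t^2-\lambda t+\tau\bigr)$, a single function on the whole line with $\Theta(0)=0$. For $s>0$ (resp.\ $s<0$), $K(s)$ equals $\frac1{|s|}$ times the Laplace transform at $\frac1{|s|}$ of $t\mapsto\Theta(-t)$ (resp.\ $t\mapsto\Theta(t)$) on $[0,\infty[$. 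By Lemma~\ref{lem:LT}, $K$ has at most $\dZ{\Theta}-1$ nonzero roots. This is how the point $s=0$ couples the two half-lines, and why one should work with $h$ rather than $h'$.

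Third, $\dZ{\Theta}$ is bounded on the whole line by peeling off one exponential at a time. Write $h_m$ for the density of $\sum_{j\le m}a_jX_j$ and set $\widetilde q=q+a_mq'$. Then $h_m-q=(h_{m-1}-\widetilde q)*f_{a_mX}$. For $a_m>0$ (reflect otherwise), $e^{t/a_m}(h_m-q)$ is a primitive, vanishing at $-\infty$, of $\frac1{a_m}e^{t/a_m}(h_{m-1}-\widetilde q)$. So zeros cannot increase, while the leading coefficient $-\frac\tau2$ is preserved (Lemma~\ref{lem:Phi-induction}). One ends with the density of $a_1X_1+a_2X_2$, $a_1>0>a_2$, which is convex on each half-line, minus a concave quadratic. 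This has at most $2+2=4$ zeros, hence at most three nonzero coordinate values. The concavity in this last count is exactly where $\mu=-\tau<0$ is needed.
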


For a minimising vector with no zero coordinates, Theorem~\ref{th:critic} therefore reduces the problem to the cases of two or three distinct coordinate values. Vectors with zero coordinates will be handled by induction in the proof of Theorem~\ref{th:simplex}. The two remaining cases are treated in Sections~\ref{sec:critic2} and \ref{sec:critic3}, respectively.

\addvspace{\medskipamount}
\begin{proposition}\label{prop:critic2}
Any section of $\Delta_n$ whose normal vector has exactly two distinct values has volume at least $V_{\min}$. Equality holds only for $\mbf a_{\min}$ up to permutation of coordinates and multiplication by $-1$.
\end{proposition}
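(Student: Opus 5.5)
Since $\sum_j a_j=0$ and $\mbf a$ takes exactly two distinct values, both values are nonzero with opposite signs. Changing the sign of $\mbf a$ if necessary (this leaves $\sigma$ unchanged, because $X$ and $-X$ enter symmetrically in the density at $0$), we may assume $\mbf a$ has $k$ coordinates equal to $\alpha>0$ and $m=N-k$ coordinates equal to $-\beta<0$, where $N:=n+1$ and $1\le k\le N-1$. The constraints $k\alpha=m\beta$ and $k\alpha^2+m\beta^2=1$ force
\[
\alpha=mc,\qquad \beta=kc,\qquad c=(kmN)^{-1/2}.
\]
So the section is determined by $k$ alone, and the plan is to compute $\sigma(\mbf a)$ in closed form as a function of $k$ and then compare the values over $k$.

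\textbf{Closed form.} I would use the probabilistic meaning of $\sigma(\mbf a)$ rather than \eqref{eq:interpol}. By \eqref{eq:Va-int}, $\sigma(\mbf a)$ is the density at $0$ of $\alpha G_k-\beta G_m$, where $G_k$ and $G_m$ are independent Gamma variables with shapes $k$ and $m$. Hence
\[
\sigma(\mbf a)=\int_0^\infty f_{\alpha G_k}(x)\,f_{\beta G_m}(x)\di x ,
\]
which is an elementary Gamma integral. It gives
\[
\sigma=\binom{N-2}{k-1}\frac{\alpha^{m-1}\beta^{k-1}}{(\alpha+\beta)^{N-1}}
=\binom{N-2}{k-1}\frac{k^{k-1/2}(N-k)^{N-k-1/2}}{N^{N-3/2}} .
\]
For $k=1$ this equals $(N-1)^{N-3/2}/N^{N-3/2}$, and substituting into \eqref{eq:Volume-sigma} reproduces $V_{\min}$, as a consistency check. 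By \eqref{eq:Volume-sigma}, the proposition is then equivalent to the following statement about
\[
g(k):=\binom{N-2}{k-1}k^{k-1/2}(N-k)^{N-k-1/2}:
\]
on $\{1,\dots,N-1\}$, $g$ attains its minimum exactly at $k=1$ and $k=N-1$. These two values of $k$ correspond to $\mbf a_{\min}$ and $-\mbf a_{\min}$ up to permutation.

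\textbf{Ratio argument.} The function $g$ is symmetric under $k\mapsto N-k$. Set $j=N-1-k$. Computing the ratio, the binomial factor contributes $(N-1-k)/k$, and after simplification
\[
\frac{g(k+1)}{g(k)}=\frac{h(k)}{h(j)},\qquad h(x):=\Big(1+\frac1x\Big)^{x+1/2}.
\]
The key step is the classical fact that $h$ is strictly decreasing on $]0,\infty[$. To prove it, write $t=1/(2x+1)$, so that
\[
\log h(x)=\frac1{2t}\log\frac{1+t}{1-t}=1+\frac{t^2}{3}+\frac{t^4}{5}+\cdots.
\]
This is strictly increasing in $t\in\,]0,1[$, hence strictly decreasing in $x$.

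Consequently $g(k+1)/g(k)>1$ whenever $k<j$, that is, whenever $k<(N-1)/2$. So $g$ strictly increases from $k=1$ up to the middle, and by symmetry strictly decreases afterwards. Its minimum over $\{1,\dots,N-1\}$ is attained only at $k\in\{1,N-1\}$ (for $N\ge3$). This gives $V(\mbf a)\ge V_{\min}$ with equality only for $\pm\mbf a_{\min}$ up to permutation.

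The main obstacle is keeping the algebra of the closed form and of the ratio exact; in particular, getting the half-integer exponents right is what makes the ratio collapse to $h(k)/h(j)$. Once that form is obtained, the monotonicity of $h$ is standard and the rest is immediate.
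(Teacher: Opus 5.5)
Your proof is correct and follows the paper's argument. You use the same one-parameter family indexed by the multiplicity, arrive at the same closed form for $\sigma$, and reduce the ratio of consecutive values to $h(k)/h(n-k)$ with $h(x)=(1+\frac1x)^{x+\frac12}$ strictly decreasing. The only differences are technical details: you get the closed form from the Gamma-density integral instead of by residues, and you prove that $h$ is decreasing via the power series of $\frac1{2t}\ln\frac{1+t}{1-t}$ instead of the trapezoid bound $\ln(1+\frac1x)<\frac12(\frac1x+\frac1{x+1})$.
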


\addvspace{\smallskipamount}
\begin{proposition}\label{prop:critic3}
Among sections of $\Delta_n$ whose normal vector has three -- not necessarily distinct -- coordinate values whose multiplicities are fixed, the minimisers of the volume either have only two distinct coordinate values, or have a zero coordinate value.
\end{proposition}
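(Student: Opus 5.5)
The plan is to reduce to a one-parameter problem and show that, away from the special points, the restricted function has no interior local maximum.

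\textbf{Setup.} Fix multiplicities $p,q,r\ge 1$ with $p+q+r=n+1$. Write the normal vector as having the value $x$ repeated $p$ times, $y$ repeated $q$ times and $z$ repeated $r$ times. The constraints $px+qy+rz=0$ and $px^2+qy^2+rz^2=1$ cut out a compact closed curve $\Gamma$ in $(x,y,z)$-space, an ellipse. By \eqref{eq:Volume-sigma}, minimising $V$ on $\Gamma$ is the same as maximising
\[
\sigma(x,y,z)=\frac1{2\pi}\int_{-\infty}^{\infty}(1+\i xt)^{-p}(1+\i yt)^{-q}(1+\i zt)^{-r}\di t .
\]
This function is continuous on $\Gamma$, so a maximiser exists. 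Mark on $\Gamma$ the finitely many special points where two of $x,y,z$ coincide or one of them vanishes. These split $\Gamma$ into finitely many open arcs. On each arc the ordering and the signs of $x,y,z$ are fixed and all three values are distinct. The proposition then follows once we show that $\sigma|_\Gamma$ has no local maximum in the interior of any such arc.

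\textbf{Derivative along the curve.} A tangent vector to $\Gamma$ at $(x,y,z)$ is
\[
v=\big(qr(y-z),\,pr(z-x),\,pq(x-y)\big).
\]
One checks directly that $pv_x+qv_y+rv_z=0$ and $pxv_x+qyv_y+rzv_z=0$. Differentiating under the integral sign gives
\[
D_v\sigma=-\frac{pqr}{2\pi}\int_{-\infty}^{\infty}\i t\,\Phi(t)\Big(\frac{y-z}{1+\i xt}+\frac{z-x}{1+\i yt}+\frac{x-y}{1+\i zt}\Big)\di t,
\qquad
\Phi(t)=(1+\i xt)^{-p}(1+\i yt)^{-q}(1+\i zt)^{-r}.
\]
Combining the bracket over a common denominator gives a numerator proportional to $(x-y)(y-z)(z-x)\,t^2$. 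Hence
\[
D_v\sigma = c\,(x-y)(y-z)(z-x)\int_{-\infty}^{\infty}\frac{t^3}{(1+\i xt)^{p+1}(1+\i yt)^{q+1}(1+\i zt)^{r+1}}\di t ,
\]
with an absolute constant $c$. The integral is a characteristic-function inversion, so we return to the probabilistic side. It is, up to a constant, the third derivative at $0$ of the density of
\[
S=xG_{p+1}+yG_{q+1}+zG_{r+1},
\]
where $G_m$ are independent Gamma$(m)$ variables. Therefore the critical points of $\sigma$ in the interior of an arc are exactly the points where $f_S'''(0)=0$.

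\textbf{Main obstacle: a sign-change count.} The step I expect to be hardest is showing that, as the point moves along an arc, $\theta\mapsto f_S'''(0)$ changes sign at most once, and only in the direction that produces a local minimum of $\sigma$. I would try the zero-diminishing route that the paper also uses for Theorem~\ref{th:critic}.
\begin{itemize}[leftmargin=*]
\item Write $f_S$ as a convolution of three scaled Gamma densities. The Gamma kernels are totally positive, so by variation-diminishing the number of sign changes is controlled.
\item Apply this in the parameter $\theta$, using the explicit partial-fraction form of $f_S'''(0)$. That form is the analogue of \eqref{eq:interpol}: a combination of terms $|a|^{-1}$ times rational functions of $x,y,z$.
\item If the sign-change bound is awkward, fall back on a second-derivative test. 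At an interior critical point of $\sigma|_\Gamma$, compute $D_v^2\sigma$ using the relation $f_S'''(0)=0$, and show it is positive.
\end{itemize}
Either version shows that every interior critical point is a strict local minimum of $\sigma|_\Gamma$.

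\textbf{Conclusion.} The maximum of $\sigma$ on $\Gamma$ is then attained at a special point. There, either two of the values coincide, so the vector has only two distinct coordinate values, or one of the values is zero. This is exactly the statement of Proposition~\ref{prop:critic3}.
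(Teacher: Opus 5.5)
\textbf{The optimisation direction is reversed, and the resulting target is false.} By \eqref{eq:Volume-sigma}, $V=\frac{\sqrt{n+1}}{(n-1)!}\,\sigma$ with a positive constant. So minimising $V$ on $\Gamma$ means \emph{minimising} $\sigma$, not maximising it. What the proposition needs is that $\sigma|_\Gamma$ has no local \emph{minimum} in the interior of an arc. The claim you aim for instead is that there is no interior local maximum, i.e.\ every interior critical point is a strict local minimum of $\sigma$. That claim would concern maximal sections, and it is false. Take $n=3$, multiplicities $(2,1,1)$, $\mbf a=(a,a,-b,-c)$ with $b>c>0$, and $x=(b-c)/a\in[0,2]$ as in Section~\ref{sec:critic3}. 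The residue formula gives
\[
\sigma(\mbf a(x))=\frac{8(8-x^2)}{(16-x^2)^2}\sqrt{\frac{8+x^2}{2}}.
\]
This equals $\frac12$ at $x=0$ (where $b=c$) and $\frac{2\sqrt6}{9}\approx0.544$ at $x=2$ (where $c=0$). It has an interior maximum $\approx0.555$ at $x^2=4\sqrt{33}-20$, where the three values are distinct and nonzero. By the symmetry $a_1\leftrightarrow a_2$, this point is even a critical point of $\sigma$ on the full constraint set. So interior critical points on arcs can be local \emph{maxima} of $\sigma$, and the paper proves that this is the only kind that occurs.

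\textbf{Even with the direction corrected, the decisive step is missing.} Your formula $D_v\sigma\propto(x-y)(y-z)(z-x)\,f_S'''(0)$ is correct. However, nothing in the plan controls the sign of $f_S'''(0)$ along an arc. Total positivity of the Gamma kernels bounds sign changes in the convolution variable, not in a curve parameter on which all three scales depend simultaneously.

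A pointwise second-derivative test at critical points cannot work either. In the paper's parametrisation, $S''=\Lambda P$ at every interior critical point, with $\Lambda>0$ and $P$ the cubic \eqref{eq:Pdef}. This $P$ is positive near $x=0$, and for $\alpha>\beta$ it is negative near $x=\alpha/\beta$; in the example above, $P(x)=48-24x^2$. So the local relation alone permits local minima wherever $P>0$, and these must be excluded globally. The paper does this in three steps:
\begin{enumerate}
\item it uses $S'(0)=0$ and $S''(0)>0$ at the endpoint $b=c$;
\item it runs a first-zero argument for $S'$ on $\{P>0\}$;
\item it uses $S'''(x_*)=\Lambda P'(x_*)<0$ at the zero $x_*$ of $P$.
\end{enumerate}
Deriving $S''=\Lambda P$ in the first place needs the explicit residue formula for $S$, the ODE \eqref{eq:Q-ode} for $Q$, and the resulting Riccati equation for $D$. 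Your plan contains none of these ingredients, nor a substitute for them.
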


Together with an induction on the number of zero coordinates, these statements imply Theorem~\ref{th:simplex}; the proof is completed in Section~\ref{sec:completion}.

\section{A necessary condition for critical sections}\label{sec:crits}

To prove Theorem~\ref{th:critic}, we minimise $\sigma(\mbf a)$ subject to $\sum_{j=1}^{n+1} a_j = 0$ and $\sum_{j=1}^{n+1} a_j^2 =1$ via the integral representation \eqref{eq:G0}. We may assume that $a_1>0$ and $a_2<0$. 

We prove that the constrained critical points of $\sigma(\mbf a)$ have at most three distinct nonzero coordinate values. To that end, we first apply  Lagrange multipliers, and show that each coordinate of a critical vector satisfies a certain equation \eqref{eq:lms}. We then estimate the number of zeros of the resulting equation by means of zero-diminishing properties.

The second part of the proof relies on the probabilistic interpretation of \eqref{eq:G0}. As we saw in Section~\ref{sec:intro}, $\sigma(\mbf a)$ coincides with $f_{\Sigma_{n+1}}(0)$, where 
\begin{equation}\label{eq:sigmadef}
\Sigma_{n+1}:=\sum_{j=1}^{n+1}a_jX_j.
\end{equation}
The density of $\Sigma_{n+1}$ can be expressed as a convolution
\begin{equation}\label{eq:convolution}
    f_{\Sigma_{n+1}}(x) = f_{\Sigma_n} * f_{a_{n+1} X} (x),
\end{equation}
where for $a\neq 0$,
\begin{equation}\label{eq:exp-dens}
f_{a X}(x)=\frac1{\abs a}\e^{-\frac xa}\one{\sgn (a) x\geq0}.
\end{equation}

Consequently, we 
need to study the integral of $f_{\Sigma_{n}}$ against an exponential weight. This is the central idea underlying our approach. The subsequent zero-diminishing properties will play a key role in this process.

Let $I\subset \R$ be some (possibly infinite) interval and $g:I\to\R$ be a continuous function. The \emph{number of distinct zeros} of $g$ will be denoted by~$\dZ{g}$. The \emph{number of sign changes} of $g$ is the supremum, over all finite monotone sequences $(x_i)_i$ in $I$, of the number of sign changes in the corresponding sequence $(g(x_i))_i$, with zero entries ignored. This number will be denoted by $\nu(g)$. If $\nu(g)$ is finite, then $I$ can be divided into $\nu(g)+1$ subintervals such that $g$ is not identically zero on any of them, has a constant sign on each of them, and has opposite signs on neighbouring subintervals.

The first diminishing property is straightforward. We nevertheless record it as a separate lemma, since it serves as a basic tool in the proof of Theorem~\ref{th:critic}.

\begin{lemma}[Zero-diminishing property of the derivative]\label{lem:cumi}
Let $g:\R\to\R$ be a differentiable function. Then, $\dZ{g}\leq\dZ{g'}+1$. Moreover, if $\lim_{x\to-\infty}g(x)=0$, then $\dZ{g}\leq\dZ{g'}$.
\end{lemma}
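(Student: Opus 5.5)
The plan is to prove both claims by Rolle's theorem, applied between consecutive zeros of $g$. If $\dZ{g}=\infty$, then between any $m$ of its zeros we obtain $m-1$ zeros of $g'$, for every $m$. Hence $\dZ{g'}=\infty$ and both inequalities hold trivially. So we may assume $g$ has finitely many distinct zeros $x_1<x_2<\cdots<x_m$, where $m=\dZ{g}$; if $m=0$ there is nothing to prove.

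\textbf{First inequality.} Since $g$ is differentiable, it is continuous on each $[x_i,x_{i+1}]$ and differentiable inside, with $g(x_i)=g(x_{i+1})=0$. Rolle's theorem gives a point $\xi_i\in\,]x_i,x_{i+1}[$ with $g'(\xi_i)=0$ for each $i=1,\ldots,m-1$. These points lie in disjoint open intervals, so they are pairwise distinct. Therefore $\dZ{g'}\geq m-1$, that is, $\dZ{g}\leq\dZ{g'}+1$.

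\textbf{Second inequality.} Assume in addition that $\lim_{x\to-\infty}g(x)=0$. It suffices to produce one more zero $\xi_0<x_1$ of $g'$, which is then distinct from $\xi_1,\ldots,\xi_{m-1}$ and gives $\dZ{g'}\geq m$.
\begin{itemize}
\item Since $g$ has no zeros on $]-\infty,x_1[$, it has constant sign there; without loss of generality $g>0$.
\item Choose any $y<x_1$. Then $g(y)>0$, while $g(x)\to0$ as $x\to-\infty$ and $g(x_1)=0$.
\item So there is $M<y$ with $g(x)<g(y)$ for all $x\le M$. The continuous function $g$ attains its maximum on the compact interval $[M,x_1]$.
\item This maximum is at least $g(y)>0$. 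It is not attained at $x_1$, where $g=0$, nor at $M$, where $g(M)<g(y)$.
\item Hence the maximum is attained at an interior point $\xi_0\in\,]M,x_1[$, and Fermat's theorem gives $g'(\xi_0)=0$.
\end{itemize}
This is the generalised Rolle theorem on the half-line $]-\infty,x_1]$.

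The only real point needing care is this last step. We have to use compactness on the truncated interval $[M,x_1]$ rather than on the whole half-line, since the supremum over $]-\infty,x_1]$ might a priori only be approached at $-\infty$. The choice of $M$ rules this out. Everything else is a direct application of Rolle's theorem.
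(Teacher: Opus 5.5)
Your proof is correct and follows the same route as the paper. You apply Rolle's theorem between consecutive zeros for the first bound, and when $g\to0$ at $-\infty$ you add one more zero of $g'$ to the left of the smallest zero of $g$. You also supply details the paper leaves implicit: the case of infinitely many zeros, and the compactness argument on $[M,x_1]$.
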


\begin{proof}
The first assertion follows from Rolle's theorem. The second part is implied by the fact that if $\lim_{x\to-\infty}g(x)=0$ and $g$ has only finitely many zeroes, then $g'$ must also have a zero to the left of the smallest zero of $g$.
\end{proof}

\medskip
For the second diminishing property, we  introduce the notation $\Laplace{g}{x}$ for the Laplace transform of a function $g$ at $x$, that is $\Laplace{g}{x}=\int_0^\infty g(t)\e^{-t x}\di t$. The lemma below states that the number of sign changes of a function bounds the number of real zeros of its Laplace transform. It was first formulated by Laguerre~\cite{Laguerre}*{p. 28}. Later, Pólya~\cite{Pólya} gave a precise proof along with several consequences. Further corollaries may be found in Parodi's works~\cites{Parodi1,Parodi2}. Below, we recover Pólya's argument~from \cite{Pólya} in more detail, with the modernised version of `sign changes'. 

\begin{lemma}[Zero-diminishing property of the Laplace transform, \cite{Pólya}]\label{lem:LT}
Let $g:[0,\infty[\,\to\R$ be a continuous function not identically zero. Assume that its Laplace transform $\Laplace{g}{x}$ is absolutely convergent for $x>x_0$. Then, $\Laplace{g}{x}$ has at most $\nu(g)$ distinct zeros in $]x_0,\infty[$.
\end{lemma}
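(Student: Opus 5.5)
We argue by induction on $m=\nu(g)$, following Pólya's scheme of factoring out an exponential and applying the zero-diminishing property of the derivative (Lemma~\ref{lem:cumi}). Write $G(x)=\Laplace{g}{x}$, $x>x_0$. The case $m=0$ is immediate: $g$ then has constant sign on $[0,\infty[$ and is not identically zero, so by continuity $G(x)=\int_0^\infty g(t)\e^{-tx}\di t$ is strictly positive or strictly negative for every $x>x_0$. Hence $G$ has no zeros.

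For the induction step, assume $\nu(g)=m\ge1$. Choose a point $c>0$ that separates the first sign block of $g$ from the second: $g$ does not change sign on $[0,c]$, does not change sign on $[c,c+\delta]$ for some $\delta>0$, and $g$ has opposite signs on these two pieces. Such a $c$ exists by the subinterval decomposition described before Lemma~\ref{lem:cumi}; we may take $c$ inside a zero set between the first two blocks. Next consider
\[
H(x)=\e^{cx}G(x)=\int_0^\infty g(t)\e^{-(t-c)x}\di t,\qquad x>x_0.
\]
It has the same zeros as $G$. Differentiating under the integral sign, which is legitimate by absolute convergence on compact subsets of $]x_0,\infty[$ since the factor $t$ is absorbed by a slightly smaller exponent, gives
\[
H'(x)=\int_0^\infty (c-t)\,g(t)\,\e^{-(t-c)x}\di t=\e^{cx}\,\Laplace{h}{x},\qquad h(t)=(c-t)g(t).
\]
The function $h$ is continuous and not identically zero. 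Multiplying by $c-t$ flips the sign of $g$ exactly at $t=c$, and $c$ lies between the first two sign blocks, so the first sign change of $g$ is removed and no new one is created: $\nu(h)=m-1$. By the induction hypothesis $\Laplace{h}{\cdot}$, and hence $H'$, has at most $m-1$ distinct zeros in $]x_0,\infty[$. Rolle's theorem (the first part of Lemma~\ref{lem:cumi}, applied on the interval $]x_0,\infty[$) then shows that $H$, and therefore $G$, has at most $m$ distinct zeros, which completes the induction.

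If $\nu(g)=\infty$ there is nothing to prove. The step needing the most care is the sign-change count $\nu(h)=m-1$. Zero stretches of $g$ must be handled correctly, and $c$ must be chosen so that $(c-t)g(t)$ keeps one sign on each merged block. Choosing $c$ in the closed gap between the last point where $g$ has the first sign and the first point where it has the second sign makes the merged interval $[0,\text{end of block }2]$ of constant sign for $h$, while later blocks keep their relative alternation. A second technical point is justifying differentiation under the integral. This is done on $[x_0+\varepsilon,\infty[$ using $|t\,g(t)\e^{-tx}|\le C_\varepsilon |g(t)|\e^{-t(x_0+\varepsilon/2)}$.
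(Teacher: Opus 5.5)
Your proof is correct and follows the paper's argument (Pólya's) essentially step for step. Both proceed by induction on $\nu(g)$, multiply the transform by $e^{cx}$, differentiate to obtain the Laplace transform of $(c-t)g(t)$ with exactly one fewer sign change, and conclude with Rolle's theorem. Your placement of $c$ in the closed zero gap between the first two sign blocks, and your domination bound for differentiating under the integral sign, make explicit details that the paper leaves implicit.
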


\begin{proof}

Let us simply denote $\nu(g)$ by $\nu$. If $\nu=\infty$, the statement is trivial. Thus, we assume that $\nu < \infty$, and apply induction on $\nu$.

First, we consider the base case: $\nu=0$. We may assume that $g\geq0$. Then, $\Laplace{g}{x}>0$ for every $x>x_0$, so the transform has no zeros.

Suppose that $\nu\geq1$ and the theorem holds for every integrand with at most $\nu-1$ sign changes. Choose a point $t_0\in\,]0,\infty[$ where $g$ changes sign and define
\[G(x)=\e^{t_0x}\Laplace{g}{x}.\]
The functions $G$ and $\Laplace{g}{.}$ have exactly the same zeros.  
Differentiation under the integral sign yields
\begin{equation*}
G'(x)=\frac{\di}{\di x}\bigg(\e^{t_0x}\int_0^\infty g(t)\e^{-xt}\di t\bigg)=\e^{t_0x}\int_0^\infty(t_0-t)g(t)\e^{-xt}\di t.
\end{equation*}
Since multiplication by the factor $(t_0-t)$ preserves the sign on  $]-\infty, t_0[$ and reverses it on $] t_0, \infty[$, and $g$ has a sign change at $t_0$, the integrand $(t_0-t)g(t)$ above has precisely $\nu-1$ sign changes. Therefore, by the induction hypothesis, $G'$ has at most  $\nu-1$ distinct zeros in $]x_0,\infty[$. By Lemma~\ref{lem:cumi}, this implies that the number of distinct zeroes of $G$, and hence of $\Laplace{g}{.}$, is at most $\nu$. This completes the induction.
\end{proof}

\vspace{\medskipamount}
The proof of Theorem~\ref{th:critic} is divided into three steps: a critical-point equation, a Laplace transform reduction, and a zero-diminishing induction. We start with the variational characterisation.

\begin{lemma}[Critical-point equation]\label{lem:critical-equation}
Let $\mbf a=(a_1,\ldots,a_{n+1})$ be a constrained critical point of $\sigma(\mbf a)$ subject to the conditions $\sum_{j=1}^{n+1} a_j=0$ and $\sum_{j=1}^{n+1} a_j^2=1$. Set 
\[
\varphi(t)=\prod_{j=1}^{n+1}\frac1{1+\i a_jt} \quad \text{and} \quad \tau=\sigma(\mbf a)=\frac1{2\pi}\int_{-\infty}^{\infty}\varphi(t)\di t.
\]
Then there exists $\mu\in\R $, such that every coordinate $a_k$ is a solution of the equation
\begin{equation}\label{eq:lms}
- \tau  x^2 + \mu x + \tau =\frac1{2\pi} \int_{-\infty} ^\infty  \varphi(t)\frac 1 {1 + \i x t} \di t.
\end{equation}
\end{lemma}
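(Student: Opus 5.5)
The plan is to write down the Lagrange condition for $\sigma$ on the constraint manifold, use the identity
\[
\frac{-\i t}{1+\i a t}=\frac1a\Big(\frac1{1+\i a t}-1\Big)
\]
to express each partial derivative through the right-hand side of \eqref{eq:lms}, and then pin down the multiplier of the quadratic constraint using the homogeneity of $\sigma$. Throughout, write
\[
J(x):=\frac1{2\pi}\int_{-\infty}^{\infty}\varphi(t)\frac{1}{1+\i xt}\di t ,
\]
so that \eqref{eq:lms} reads $J(x)=\tau+\mu x-\tau x^2$. Note that $J(0)=\tau$, so $x=0$ always solves \eqref{eq:lms} whatever $\mu$ is. Hence the claim only needs checking at the nonzero coordinates $a_k$.

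\textbf{Step 1 (differentiation).} For $a_k\neq 0$, differentiating $\varphi$ with respect to $a_k$ gives the factor $-\i t/(1+\i a_k t)$. By the identity above,
\[
\frac{\partial\sigma}{\partial a_k}(\mbf a)=\frac1{a_k}\big(J(a_k)-\tau\big).
\]
Passing the derivative under the integral sign needs a dominating function on a neighbourhood of $\mbf a$. Each factor satisfies $|1+\i a_jt|^{-1}=(1+a_j^2t^2)^{-1/2}$, and $\mbf a$ has at least two nonzero coordinates, so $|\varphi|$ decays at least like $C(1+t^2)^{-1}$ uniformly near $\mbf a$. The factor $|t|\,|1+\i a_kt|^{-1}$ is bounded by $1/|a_k|$ near $\mbf a$. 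Together these give an integrable majorant.

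\textbf{Step 2 (Lagrange condition).} With multipliers $\lambda,\nu$, criticality means
\[
\frac{\partial\sigma}{\partial a_k}=\lambda+2\nu a_k
\]
for every $k$ at which the derivative exists. Multiplying by $a_k\neq0$ gives $J(a_k)=\tau+\lambda a_k+2\nu a_k^2$, so \eqref{eq:lms} follows with $\mu=\lambda$, provided $2\nu=-\tau$.

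\textbf{Step 3 (identifying $\nu$).} Substituting $t\mapsto t/s$ in \eqref{eq:G0} shows that $\sigma(s\mbf a)=s^{-1}\sigma(\mbf a)$ for $s>0$. Differentiating at $s=1$ gives the Euler relation
\[
\sum_k a_k\frac{\partial\sigma}{\partial a_k}=-\tau .
\]
Only nonzero $a_k$ contribute to this sum, so it only uses the derivatives from Step 1. Multiplying the Lagrange equations by $a_k$ and summing, the constraints $\sum a_k=0$ and $\sum a_k^2=1$ give $-\tau=2\nu$. Hence $2\nu=-\tau$, as required in Step 2.

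\textbf{Main obstacle.} The delicate point is the meaning of ``critical'' when some coordinates vanish. With exactly two nonzero coordinates, $\partial\sigma/\partial a_k$ at a zero coordinate would involve $\int\varphi(t)\,t\di t$, which may fail to converge absolutely. I would handle this by applying Lagrange multipliers in the variables where $\sigma$ is differentiable, namely the nonzero coordinates, restricted to the same constraints. This suffices, because Steps 1–3 only ever use derivatives in the nonzero coordinates, and the zero coordinates satisfy \eqref{eq:lms} trivially through $J(0)=\tau$.
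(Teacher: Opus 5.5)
Your proof is correct and follows essentially the paper's route: Lagrange multipliers, the identity $\frac{-\i t}{1+\i a t}=\frac1a\big(\frac1{1+\i at}-1\big)$, and multiplying the Lagrange equations by $a_k$ and summing to show that the multiplier of $\sum a_j^2$ is $-\tau$. Your Euler-relation step from $\sigma(s\mbf a)=s^{-1}\sigma(\mbf a)$ is the same computation as the paper's integration by parts $\int t\varphi'(t)\di t=-\int\varphi(t)\di t$, and your handling of zero coordinates, restricting to the nonzero variables and using $J(0)=\tau$, is a harmless extension of the paper's blanket assumption that all $a_j\neq0$ (the only case used later).
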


\begin{proof}

Since all coordinates $a_j$ are nonzero, the integrand $\varphi(t)$ and its derivatives with respect to the variables $a_j$ have sufficient decay at infinity to justify differentiation of the formula~\eqref{eq:G0} under the integral sign. Let $\mbf a$ be a constrained critical point of $\sigma$. By the Lagrange multiplier method, there exist $\lambda,\mu\in\R$ such that, for every $k$,
\begin{equation}\label{eq:lak}
    \lambda a_k+\mu=-\frac1{2\pi}\int_{-\infty}^{\infty}\varphi(t)\frac{\i t}{1+\i a_kt}\di t.
\end{equation}
Multiplying \eqref{eq:lak} by $a_k$ gives
\begin{equation}\label{eq:lak2}
\lambda a_k^2+\mu a_k=-\frac1{2\pi}\int_{-\infty}^{\infty}\varphi(t)\frac{\i a_kt}{1+\i a_kt}\di t.
\end{equation}
Since $\sum_{k=1}^{n+1} a_k =0$ and $\sum_{k=1}^{n+1} a_k^2 = 1$, summing over $k$ leads to
\begin{equation*}\label{eq:lambda}\lambda=-\frac1{2\pi}\int_{-\infty}^\infty\varphi(t)\sum_{k=1}^{n+1}\frac{\i a_k t}{1+\i a_kt}\di t=\frac1{2\pi}\int_{-\infty}^\infty\varphi'(t)t\di t=-\frac1{2\pi}\int_{-\infty}^\infty \varphi(t)\di t=-\tau.
\end{equation*}
We note that by a similar argument, the value of $\mu$ can be determined as $\mu=\tau\sum_{k=1}^{n+1} a_k^3$, although this precise formula will not be used in the rest of the proof.

Finally, by \eqref{eq:lak2}, we have
\begin{equation*}
- \tau a_k^2+\mu a_k=
-\frac1{2\pi}\int_{-\infty}^\infty\varphi(t)\Big(1-\frac{1}{1+\i a_kt}\Big)\di t=-\tau+\frac1{2\pi}\int_{-\infty}^\infty\varphi(t)\frac1{1+\i a_kt}\di t,
\end{equation*}
showing that $a_k$ satisfies the equation \eqref{eq:lms}.
\end{proof}

\medskip
Let us now consider the function
\begin{equation}\label{eq:K}
K(x):=f_{\Sigma_{n+1}+xX}(0)-(- \tau x^2+\mu x+\tau),
\end{equation}
where $\Sigma_{n+1}$ is defined by \eqref{eq:sigmadef}, and $X$ is a random variable exponentially distributed with parameter $1$ and independent of $\{X_j\}_{j=1}^{n+1}$. Based on the probabilistic interpretation, the integral on the right-hand side of \eqref{eq:lms} coincides with the first term in~\eqref{eq:K}. Therefore, any solution of \eqref{eq:lms} belongs to the zero set of $K$. Our goal is to show that there are at most four zeros. Note that by definition of $\tau$, $x=0$ is a trivial zero of $K$.

In the definition of $K(x)$, the variable $x$ appears inside the random variable $\Sigma_{n+1}+xX$, which makes the zeros of $K$ challenging to analyse directly. To overcome this difficulty, we represent $f_{\Sigma_{n+1}+xX}(0)$ in terms of the Laplace transform of a suitable function $\Phi$. Lemma~\ref{lem:LT} then allows us to bound the number of zeros of $K$ by the number of sign changes of $\Phi$. This provides a convenient reduction of the problem. Accordingly, let us introduce 
\begin{equation}\label{eq:Gt}
\Phi(t):=f_{\Sigma_{n+1}}(t)-\Big(\frac{-\tau}2t^2-\mu t+\tau\Big).
\end{equation}

\begin{lemma}[Laplace transform reduction]\label{lem:K-Phi}
For the functions $K$ and $\Phi$ defined in \eqref{eq:K} and \eqref{eq:Gt}, respectively, the following relation holds:
\[\dZ{K}\leq\dZ{\Phi}.\]
\end{lemma}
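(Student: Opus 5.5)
The plan is to express $K$, separately on $x>0$ and on $x<0$, as a rescaled Laplace transform of $\Phi$ restricted to the negative or the positive half-line. Lemma~\ref{lem:LT} then bounds the zeros of $K$ on each side by the sign changes of $\Phi$ on the corresponding side, and the trivial zeros $K(0)=0$ and $\Phi(0)=0$ are matched with each other.

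\emph{Step 1: integral formula for $K$.} Write $\Sigma=\Sigma_{n+1}$ and use the convolution identity \eqref{eq:convolution} with \eqref{eq:exp-dens}. For $x>0$, the substitution $u=xs$ gives
\[
f_{\Sigma+xX}(0)=\int_0^\infty f_{\Sigma}(-xs)\,\e^{-s}\di s=\frac1x\int_0^\infty f_{\Sigma}(-u)\,\e^{-u/x}\di u .
\]
For $x<0$, writing $s=|x|$, the same computation gives $\frac1s\int_0^\infty f_\Sigma(u)\e^{-u/s}\di u$.

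\emph{Step 2: the quadratic is the same transform of the polynomial in \eqref{eq:Gt}.} Use $\int_0^\infty u^k\e^{-u/s}\di u=k!\,s^{k+1}$. For $x>0$, evaluate the polynomial $-\frac\tau2t^2-\mu t+\tau$ at $t=-u$ and transform:
\[
\frac1x\int_0^\infty\Big(-\frac\tau2u^2+\mu u+\tau\Big)\e^{-u/x}\di u=-\tau x^2+\mu x+\tau .
\]
For $x=-s<0$, evaluate the polynomial at $t=u$; this gives $-\tau s^2-\mu s+\tau$, which again equals $-\tau x^2+\mu x+\tau$. Hence
\[
K(x)=\frac1x\,\Laplace{\Phi(-\cdot)}{\tfrac1x}\quad(x>0),\qquad K(x)=\frac1{|x|}\,\Laplace{\Phi}{\tfrac1{|x|}}\quad(x<0).
\]
Both transforms converge absolutely for every positive argument, since $f_\Sigma$ is bounded and the rest of $\Phi$ is a polynomial; so we may take $x_0=0$. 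The map $x\mapsto 1/|x|$ is a bijection from each open half-line onto $]0,\infty[$, and the factor $1/|x|$ creates no zeros.

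\emph{Step 3: counting.} The hypotheses of Lemma~\ref{lem:LT} need checking, and this is the only delicate part. $\Phi$ is continuous because $\mbf a$ has at least two nonzero coordinates, so $f_\Sigma$ is a convolution of at least two exponential densities. $\Phi$ is not identically zero on either half-line, since $f_\Sigma\to0$ while the quadratic tends to $-\infty$ (recall $\tau>0$). We may assume $\dZ{\Phi}<\infty$, otherwise there is nothing to prove.

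On an open half-line, a continuous function with finitely many zeros has at most that many sign changes. Lemma~\ref{lem:LT} therefore gives
\[
\dZ{K|_{]0,\infty[}}\le\nu\big(\Phi|_{]-\infty,0[}\big)\le\dZ{\Phi|_{]-\infty,0[}},\qquad
\dZ{K|_{]-\infty,0[}}\le\dZ{\Phi|_{]0,\infty[}}.
\]
Finally, $K(0)=0$ by the definition of $\tau$, and also $\Phi(0)=f_\Sigma(0)-\tau=0$. So the zero of $K$ at the origin is matched by a zero of $\Phi$ at the origin. Adding the three contributions yields $\dZ{K}\le\dZ{\Phi}$.
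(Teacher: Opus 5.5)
Your proposal is correct and follows the paper's proof essentially verbatim. You use the same half-line representations $K(x)=\frac1x\Laplace{\Phi(-\cdot)}{\frac1x}$ for $x>0$ and $K(x)=\frac1{|x|}\Laplace{\Phi}{\frac1{|x|}}$ for $x<0$, apply Lemma~\ref{lem:LT} on each side with sign changes bounded by zeros, and match the common zero at the origin; you are in fact slightly more careful than the paper in checking the hypotheses of Lemma~\ref{lem:LT} (continuity, not identically zero on either half-line, absolute convergence with $x_0=0$).
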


\begin{proof}

For $x>0$, by \eqref{eq:convolution} and \eqref{eq:exp-dens}, we have
\begin{equation*}\label{eq:LT>0}
\begin{split}
K(x)=
f_{\Sigma_{n+1}+xX}(0)-\frac1x\Big(- \tau x^3+\mu x^2+\tau x\Big)=
\frac1x\int_{0}^\infty \bigg(f_{\Sigma_{n+1}}(-t)-\Big(\frac{- \tau}2t^2+\mu t+\tau\Big)\bigg)\e^{-\frac tx}\di t,
\end{split}
\end{equation*}
while if $x<0$, then by writing $y=-x$, 
\begin{equation*}
K(x)=
f_{\Sigma_{n+1}-yX}(0)-\frac1y\Big(- \tau{y^3}-\mu{y^2}+\tau y\Big)=
\frac1y\int_0^\infty \bigg(f_{\Sigma_{n+1}}(t)-\Big(\frac{- \tau}2t^2-\mu t+\tau\Big)\bigg)\e^{-\frac ty}\di t.
\end{equation*}

Therefore,
\[
K(x)=
\begin{cases}
\dfrac1x\Laplace{\Phi(-\,\cdot)}{\dfrac1x},
& \text{if }x>0,\\[6pt]
0,
& \text{if }x=0,\\[6pt]
-\dfrac1x\Laplace{\Phi}{-\dfrac1x},
& \text{if }x<0.
\end{cases}
\]
Here, in the first line, the Laplace transform is applied to the function $t\mapsto\Phi(-t)$ on $[0,\infty[$, while in the third line, it is applied to the restriction of $\Phi$ to $[0,\infty[$.

Lemma~\ref{lem:LT} bounds the positive and negative zeros of $K$ by the numbers of sign changes of $\Phi$ on $]-\infty,0[$ and $]0,\infty[\,$, respectively. Since $\Phi$ is continuous, the number of sign changes on either open half-line is at most the number of its zeros there. Finally, $K(0)=\Phi(0)$, so a zero at the origin is common to both functions. Hence $\dZ{K}\leq\dZ{\Phi}.$
\end{proof}

\vspace{\medskipamount}
It remains to estimate the number of zeros of $\Phi$. We will argue by induction on the number of random variables, by eliminating the variables $a_jX_j$ from $\Sigma_{n+1}$ one at a time. The convolution structure of exponential random variables ensures that the number of zeros does not increase; hence, the problem reduces to the explicitly tractable case of $\Phi_2$.

\begin{lemma}[Zero-diminishing induction]\label{lem:Phi-induction}
For any  $3 \leq m \leq n+1$ and for any polynomial $q$, there exists a polynomial $\widetilde{q}$ whose leading term is identical to that of $q$, and 
\[
\dZ{f_{\Sigma_m} - q}\leq\dZ{f_{\Sigma_{m-1}}-\widetilde q}.
\]
\end{lemma}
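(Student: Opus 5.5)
The plan is to exploit the fact that the exponential density $f_{aX}$ is the Green's function of the first-order operator $1+a\frac{\di}{\di x}$. Removing one summand $a_mX_m$ from $\Sigma_m$ then amounts to applying this operator. The operator can be written as a derivative conjugated by an exponential, so Lemma~\ref{lem:cumi} controls the zeros.

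\emph{Step 1: a differential identity.} Put $a=a_m\neq 0$. From \eqref{eq:exp-dens} one checks that, in the distributional sense, $f_{aX}+a f_{aX}'=\delta_0$. Combined with \eqref{eq:convolution}, $f_{\Sigma_m}=f_{\Sigma_{m-1}}*f_{aX}$, this gives
\[
f_{\Sigma_m}+a f_{\Sigma_m}'=f_{\Sigma_{m-1}}.
\]
I would verify this classically rather than distributionally. Write out the convolution with the explicit kernel, e.g.\ for $a>0$,
\[
f_{\Sigma_m}(x)=\frac1a\int_{-\infty}^x f_{\Sigma_{m-1}}(s)\e^{-(x-s)/a}\di s ,
\]
and differentiate under the integral, using the fundamental theorem of calculus. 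This needs $f_{\Sigma_{m-1}}$ to be continuous, which holds because $m-1\geq 2$ and all $a_j$ are nonzero. This is exactly where the hypothesis $m\geq 3$ enters. In particular, $f_{\Sigma_m}$ is $C^1$.

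\emph{Step 2: the choice of $\widetilde q$.} Set $g=f_{\Sigma_m}-q$ and $\widetilde q=q+aq'$. Since $\deg q'<\deg q$, the polynomial $\widetilde q$ has the same leading term as $q$. By Step 1,
\[
g+ag'=f_{\Sigma_{m-1}}-\widetilde q .
\]
Moreover,
\[
g+ag'=a\,\e^{-x/a}\big(\e^{x/a}g\big)' .
\]
Hence, with $h(x):=\e^{x/a}g(x)$, we have $\dZ{h}=\dZ{g}$ and $\dZ{h'}=\dZ{f_{\Sigma_{m-1}}-\widetilde q}$.

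\emph{Step 3: applying Lemma~\ref{lem:cumi} without losing one.} The crude Rolle bound would give only $\dZ{g}\leq \dZ{h'}+1$, so the main point is to secure the sharper version of Lemma~\ref{lem:cumi}. This requires $h\to 0$ at one end.

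Suppose first $a>0$; we look at $x\to-\infty$. There, $\e^{x/a}q(x)\to 0$ because of the exponential factor. Also, $f_{\Sigma_m}$ is bounded (indeed, it vanishes or decays exponentially on the left), so $\e^{x/a}f_{\Sigma_m}(x)\to0$ as well. Thus $\lim_{x\to-\infty}h(x)=0$, and Lemma~\ref{lem:cumi} yields $\dZ{g}=\dZ{h}\leq\dZ{h'}$.

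If $a<0$, then $\e^{x/a}\to0$ as $x\to+\infty$, and the same argument applies to the reflected function $x\mapsto h(-x)$, to which Lemma~\ref{lem:cumi} is applicable verbatim.

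I expect the only delicate points to be the following:
\begin{itemize}
\item the regularity justification in Step 1, namely that $f_{\Sigma_m}$ is genuinely differentiable, which is why $m\geq3$ is needed;
\item the case where $g$ has infinitely many zeros, where the inequality is trivial provided the right side is also infinite, or else follows from Rolle's theorem;
\item the handling of the boundary behaviour in Step 3, which is what removes the "$+1$".
\end{itemize}
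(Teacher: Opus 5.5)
Your proposal is correct and follows essentially the paper's proof. It uses the same choice $\widetilde q=q+a_mq'$, the same conjugation by $\e^{t/a_m}$, and the sharp form of Lemma~\ref{lem:cumi} via vanishing at $-\infty$ (after reflection if $a_m<0$); the only difference is cosmetic, since the paper works in integral form ($\widetilde q*f_{a_mX}=q$ and a cumulative integral) while you use the equivalent differential identity $f_{\Sigma_m}+a_mf_{\Sigma_m}'=f_{\Sigma_{m-1}}$.
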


\begin{proof}

Set 
\[
\widetilde q = a_m q' + q,
\]
whose leading term is clearly identical to that of $q$. Then,
\begin{equation}\label{eq:convi}
\widetilde q * f_{a_m X} = q.
\end{equation}
Indeed, assume first that $a_m>0$. By \eqref{eq:exp-dens},
\[
\big(\widetilde q*f_{a_mX}\big)(t)
=\frac1{a_m}\int_{-\infty}^t
\big(a_m q'(s)+q(s)\big)e^{-\frac{t-s}{a_m}}\di s
=e^{-\frac t{a_m}}\int_{-\infty}^t
\frac{\di}{\di s}\big(q(s)e^{\frac s{a_m}}\big)\di s
=q(t)
\]
since $\lim_{s\to-\infty}q(s)e^{\frac s{a_m}}=0$. Similarly, for $a_m<0$, 
\[
\big(\widetilde q*f_{a_mX}\big)(t)
=\frac1{-a_m}\int_t^{\infty}
\big(a_m q'(s)+q(s)\big)e^{-\frac{t-s}{a_m}}\di s
=-e^{-\frac t{a_m}}\int_t^{\infty}
\frac{\di}{\di s}\big(q(s)e^{\frac s{a_m}}\big)\di s
=q(t)
\]
that verifies \eqref{eq:convi}.

Since the convolution is distributive, by \eqref{eq:convolution} and \eqref{eq:convi}, we have
\[
f_{\Sigma_m} - q  =  (f_{\Sigma_{m-1}} -\widetilde q )* f_{a_m X}.
\]
We will compare the number of zeros of the differences on each side.

We may assume that $a_m>0$, since after the reflection $t\mapsto-t$, the argument also applies to the case $a_m<0$. Expanding the convolution above using \eqref{eq:exp-dens},
\[f_{\Sigma_m}(t) - q(t)=\int_{-\infty}^t\frac1{a_{m}}\e^{-\frac{t-s}{a_{m}}}\big(f_{\Sigma_{m-1}}(s) -\widetilde q(s) \big)\di s,
\]
equivalently,
\[\e^{\frac t{a_{m}}}\big(f_{\Sigma_m}(t) - q(t)\big)=\int_{-\infty}^t \frac1{a_{m}}\e^{\frac{s}{a_{m}}}\big(f_{\Sigma_{m-1}}(s) -\widetilde q(s) \big)\di s.\]

Thus, the left-hand side is the cumulative integral of
$
\frac1{a_m}\e^{\frac s{a_m}}
\bigl(f_{\Sigma_{m-1}}(s)-\widetilde q(s)\bigr)
$.
Since ${a_m>0}$, this integrand is absolutely integrable near $-\infty$: the density term is integrable, while the exponential factor dominates the polynomial $\widetilde q$. Hence, 
$\lim_{t\to-\infty}
\e^{\frac t{a_m}}\bigl(f_{\Sigma_m}(t)-q(t)\bigr)=0.
$
Consequently, Lemma~\ref{lem:cumi} gives
\[
\dZ{\e^{\frac t{a_m}}\big(f_{\Sigma_m}(t)-q(t)\big)}
\leq
\dZ{\frac1{a_m}\e^{\frac t{a_m}}
\big(f_{\Sigma_{m-1}}(t)-\widetilde q(t)\big)}.
\]
Since positive factors do not change the number of zeros, this implies that
\[\dZ{f_{\Sigma_m} - q}\leq\dZ{f_{\Sigma_{m-1}} -\widetilde q }.\qedhere\]

\end{proof}

\vspace{\smallskipamount}
\begin{proof}[Proof of Theorem~\ref{th:critic}.]
Let $\mbf a$ be a critical normal vector with nonzero coordinates. By Lemma~\ref{lem:critical-equation}, every coordinate $a_k$ satisfies \eqref{eq:lms}, and hence is a zero of the function $K$ defined in \eqref{eq:K}. Lemma~\ref{lem:K-Phi} gives $\dZ{K}\leq\dZ{\Phi}$. By \eqref{eq:Gt},
\[
\Phi(t)=f_{\Sigma_{n+1}}(t) - p(t)
\]
with
\[
p(t)=\frac{ -\tau}2t^2-\mu t+\tau.
\]

Set $p_{n+1}=p$ and apply Lemma~\ref{lem:Phi-induction} recursively for $m=n+1,n,\ldots,3$ and $q=p_m$, yielding $\widetilde{q} =: p_{m-1}$. Then, quadratic polynomials $p_n, \ldots, p_2$
all have leading term $\frac{- \tau}2t^2$. 

Define $\Phi_{n+1}=\Phi$ and set
\[
\Phi_m(t) = f_{\Sigma_m}(t)-p_m(t)
\]
for each $m=2, \ldots, n$. Then,
\begin{equation}\label{eq:zphin}
\dZ{\Phi} = \dZ{\Phi_{n+1}} \leq \dZ{\Phi_n} \leq \ldots \leq \dZ{\Phi_2}.
\end{equation}
Note that  $p_2$ is a quadratic polynomial with negative leading coefficient, hence it is a concave function. 

It remains to estimate $\dZ{\Phi_2}$. 
Observe that the density function
\begin{equation}\label{eq:2vars}
f_{\Sigma_2}(x)=f_{a_1X_1+a_2X_2}(x)=\begin{cases}\frac1{a_1-a_2}\e^{-\frac x{a_1}},&\text{if}\ x\geq0,\\\frac1{a_1-a_2}\e^{-\frac x{a_2}},&\text{if}\ x\leq0\end{cases}
\end{equation}
is continuous, and its restrictions to the positive and negative half-lines are convex functions. Accordingly, both of these restrictions can intersect $p_2$ at most twice, yielding that 
\[
\dZ{\Phi_2}\leq4.
\]
           
Combined with \eqref{eq:zphin} and Lemma~\ref{lem:K-Phi}, this yields that $K$ has at most four distinct zeros. Since one of these is the trivial root $x=0$, this concludes the proof of Theorem~\ref{th:critic}.
\end{proof}

\begin{figure}[H]
\centering
\includegraphics[scale=.55]{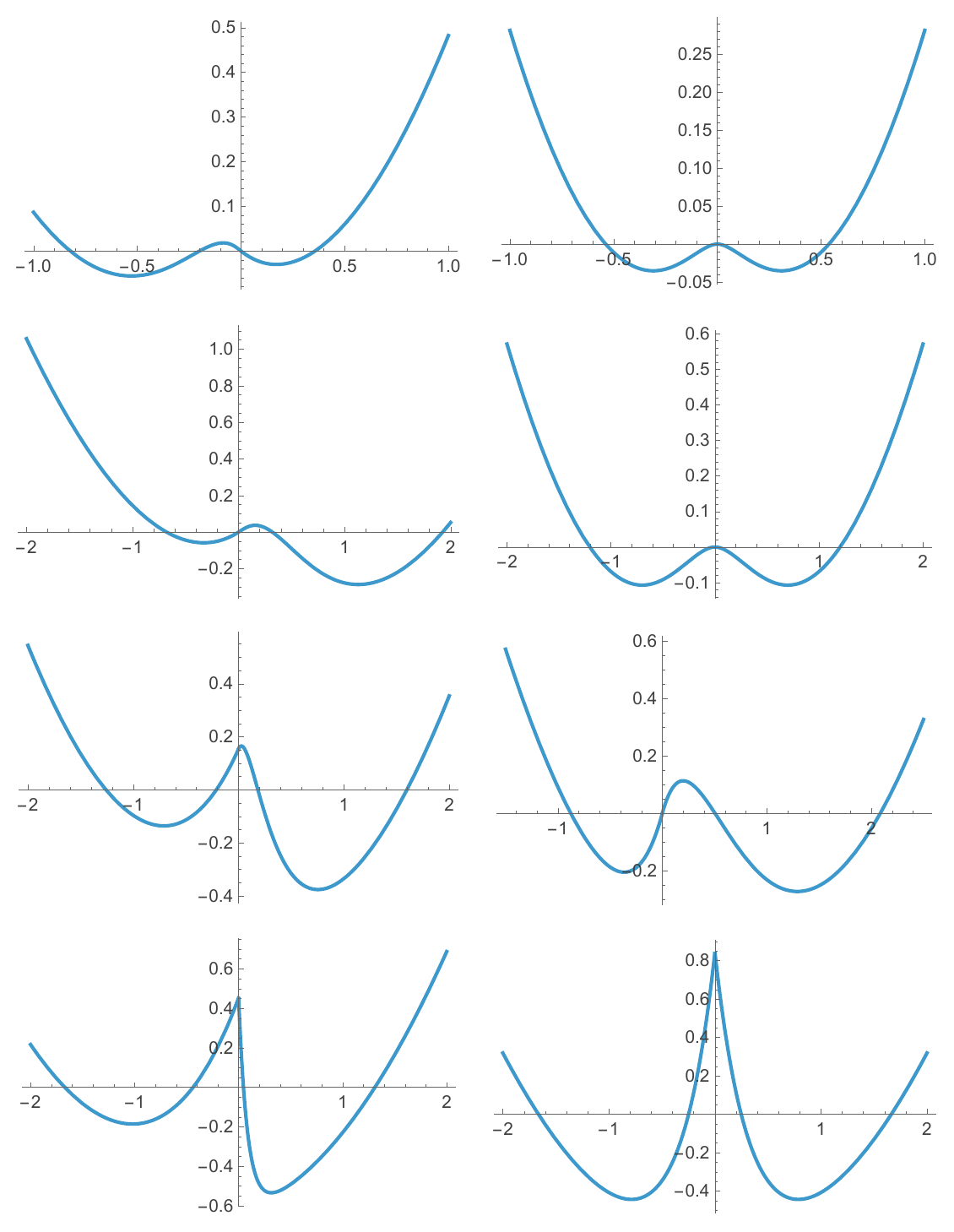}
\caption{Illustration of the argument in two representative examples. The four rows display $K$, $\Phi_4$, $\Phi_3$, and $\Phi_2$, respectively. The first column corresponds to $\mathbf a=\frac1{\sqrt{142}}(1,-10,4,5)$, illustrating the generic case of four distinct zeros. The second column corresponds to $\mathbf a=\frac1{\sqrt{10}}(1,-1,2,-2)$, for which symmetry forces both $K$ and $\Phi_4$ to have only three distinct zeros.}
\end{figure}

\section{Normal vectors with two coordinates}\label{sec:critic2}

In this section, we prove Proposition~\ref{prop:critic2}. Suppose that $\mbf a$ is a normal vector with exactly two distinct coordinates (which must be nonzero). After multiplying $\mbf a$ by $-1$, if necessary, we may write it as
\[
\mbf a_\alpha=(\underbrace{a,\ldots,a}_{\alpha},\underbrace{-b,\ldots,-b}_{\beta}),
\qquad a,b>0,
\]
where $\alpha+\beta=n+1$. The conditions that $\mbf a_\alpha$ be orthogonal to $(1,\ldots,1)$ and have unit norm are
\[
\alpha a=\beta b,
\qquad
\alpha a^2+\beta b^2=1.
\]
Hence,
\[
a=\sqrt{\frac{\beta}{\alpha(n+1)}},
\qquad
b=\sqrt{\frac{\alpha}{\beta(n+1)}},
\]
so the vector is determined by the multiplicity $\alpha$.

\begin{proof}[Proof of Proposition~\ref{prop:critic2}]
By \eqref{eq:Volume-sigma}, it is enough to minimise $\sigma(\mbf a_\alpha)$. As in Webb's work~\cite{Webb}, we evaluate \eqref{eq:G0} by applying the residue theorem in the upper half-plane:
\begin{align*}
\sigma(\mbf a_{\alpha})&=
\frac1{2\pi}\int_{-\infty}^\infty\Big(\frac{1}{1+\i at}\Big)^\alpha\Big(\frac{1}{1-\i bt}\Big)^\beta\di t=
\i\, \Res{\Big(\frac{1}{1+\i at}\Big)^\alpha\Big(\frac{1}{1-\i bt}\Big)^\beta}{\frac\i a}=\\[5pt]&=
\frac\i{(\alpha-1)!}\lim_{t\to\frac\i a}\frac{\di^{\alpha-1}}{\di t^{\alpha-1}}\bigg(\Big(t-\frac\i a\Big)^\alpha\Big(\frac1{1+\i at}\Big)^\alpha\Big(\frac1{1-\i bt}\Big)^\beta\bigg)=\\[5pt]&=
\frac\i{(\alpha-1)!}\cdot(\i a)^{-\alpha}\cdot\lim_{t\to\frac\i a}\bigg(\frac{(n-1)!}{(\beta-1)!}(1-\i bt)^{-n}\cdot(\i b)^{\alpha-1}\bigg)=\\[5pt]&=
\binom{n-1}{\alpha-1}\Big(1+\frac ba\Big)^{-n}\Big(\frac ba\Big)^{\alpha}\frac1b.
\end{align*}
Since $\frac ba=\frac \alpha\beta$ and $b=\sqrt{\frac\alpha{\beta(n+1)}}$, this becomes
\[
\sigma(\mbf a_{\alpha})
=\sqrt{n+1}\binom{n-1}{\alpha-1}
\Big(1+\frac\alpha\beta\Big)^{-n}
\Big(\frac\alpha\beta\Big)^{\alpha-\frac12},
\qquad \beta=n+1-\alpha.
\]
Replacing the normal vector by its negative interchanges $\alpha$ and $\beta$ without changing the section, so we may assume $\alpha\leq\beta$. We claim that $\sigma(\mbf a_\alpha)$ is strictly increasing for $1\leq\alpha\leq\big\lfloor \frac{n+1}2\big\rfloor$.

Indeed, for $1\leq\alpha<\lfloor \frac{n+1}2\rfloor$,
\begin{equation}\label{eq:sigma-ratio}
\frac{\sigma(\mbf a_{\alpha+1})}{\sigma(\mbf a_\alpha)}
=
\frac{(1+\frac1\alpha)^{\alpha+\frac12}}
{(1+\frac1{n-\alpha})^{n-\alpha+\frac12}}.
\end{equation}
Set
\[
h(x)=\Big(1+\frac1x\Big)^{x+\frac12},
\qquad x>0.
\]
Taking the logarithmic derivative, we derive that
\[
\frac{h'(x)}{h(x)}
=\ln\Big(1+\frac1x\Big)-\frac12\Big(\frac1x+\frac1{x+1}\Big).
\]
Since $t\mapsto\frac1t$ is strictly convex,
\[
\ln\Big(1+\frac1x\Big)
=\int_x^{x+1}\frac{1}{t}\di t
<\frac12\Big(\frac1x+\frac1{x+1}\Big),
\]
and hence $h(x)$ is strictly decreasing for $x >0$.

Accordingly, as $\alpha<n-\alpha$ for $\alpha<\big\lfloor \frac{n+1}2\big\rfloor$, by~\eqref{eq:sigma-ratio} we derive that $\sigma(\mbf a_\alpha) < \sigma(\mbf a_{\alpha+1})$ for every $1 \leq \alpha < \big\lfloor \frac{n+1}2\big\rfloor$. Therefore, the minimum of $\sigma(\mbf a_\alpha)$ in the interval under consideration occurs at $\alpha=1$, that is $\beta=n$. Using \eqref{eq:Volume-sigma},
\[
V(\mbf a_\alpha)
\geq \frac{\sqrt{n+1}}{(n-1)!}\,\sigma(\mbf a_1)
=\frac{\sqrt{n+1}}{(n-1)!}\Big(\frac{n}{n+1}\Big)^{n-\frac12}.
\]
The vector $\mbf a_1$ is precisely $\mbf a_{\min}$ from Theorem~\ref{th:simplex}; the case $\beta=1$ gives its negative. This also proves the equality statement.
\end{proof}

\section{Normal vectors with three coordinates values}\label{sec:critic3}

In this section, we prove Proposition~\ref{prop:critic3}. Since $V(\mbf a)=V(-\mbf a)$, after multiplying the normal vector by $-1$ if necessary and permuting its coordinates, we may write
\[
\mbf a=
(\underbrace{a,\ldots,a}_{\alpha},
 \underbrace{-b,\ldots,-b}_{\beta},
 \underbrace{-c,\ldots,-c}_{\gamma}),
\]
where $a>0$, $b\geq c\geq0$, $\alpha+\beta+\gamma=n+1$ and $\alpha,\beta,\gamma>0$. The conditions on $\mbf a$ read as $
\alpha a-\beta b-\gamma c=0$ and 
$
\alpha a^2+\beta b^2+\gamma c^2=1.
$

Set $m=\beta+\gamma$, $u=\frac ba$, $v=\frac ca$, and introduce
\[
x=u-v=\frac{b-c}{a}.
\]
Then, $x\in[0,\frac\alpha\beta]$ and
\begin{equation}\label{eq:uva}
u=\frac{\alpha+\gamma x}{m},\qquad v=\frac{\alpha-\beta x}{m},\qquad{a^2}
=\frac m{\alpha(n+1)+\beta\gamma x^2}.
\end{equation}
The endpoint $x=0$ corresponds to $b=c$, hence to the two-value case with multiplicities $\alpha$ and $m=\beta+\gamma$, while $x=\frac\alpha\beta$ corresponds to $c=0$. We regard $u$, $v$ and $a^2$ as functions of $x$, but omit their argument for simplicity. In particular, $\mbf a=\mbf a(x)$ is uniquely determined by $x$. 
For later reference, we note the formulae
\begin{equation}\label{eq:uvaD}
u'=\frac{\gamma}{m},\qquad
v'=-\frac{\beta}{m},\qquad
(a^2)'=-\frac{2\beta\gamma}{m}xa^4.
\end{equation}

\smallskip

Since $V(\mbf a)$ is a positive constant multiple of $\sigma(\mbf a)$ by \eqref{eq:Volume-sigma}, it is enough to minimise
\[
S(x):=\sigma(\mbf a(x)).
\]
For $c>0$, we evaluate \eqref{eq:G0} by applying the residue theorem in the upper half-plane, exactly as in Section~\ref{sec:critic2}, and use the general Leibniz rule in the third line:
\begin{align*}
S(x)&=
\frac1{2\pi}\int_{-\infty}^\infty\Big(\frac{1}{1+\i at}\Big)^\alpha\Big(\frac{1}{1-\i bt}\Big)^\beta\Big(\frac{1}{1-\i ct}\Big)^\gamma\di t=\\[5pt]&=
\i\, \Res{\Big(\frac{1}{1+\i at}\Big)^\alpha\Big(\frac{1}{1-\i bt}\Big)^\beta\Big(\frac{1}{1-\i ct}\Big)^\gamma}{\frac\i a}=\\[5pt]&=
\frac\i{(\alpha-1)!}\lim_{t\to\frac\i a}\frac{\di^{\alpha-1}}{\di t^{\alpha-1}}\bigg(\Big(t-\frac\i a\Big)^\alpha\Big(\frac1{1+\i at}\Big)^\alpha\Big(\frac1{1-\i bt}\Big)^\beta\Big(\frac{1}{1-\i ct}\Big)^\gamma\bigg)=\\[5pt]&=
\frac\i{(\alpha-1)!}\cdot(\i a)^{-\alpha}\cdot\lim_{t\to\frac\i a}\sum_{k=0}^{\alpha-1}\binom{\alpha-1}k\frac{\di^{k}}{\di t^k}(1-\i bt)^{-\beta}\frac{\di^{\alpha-1-k}}{\di t^{\alpha-1-k}}(1-\i ct)^{-\gamma}=\\[5pt]&=
{a^{-\alpha}}\sum_{k=0}^{\alpha-1}\binom{\beta+k-1}k\Big(1+\frac ba\Big)^{-\beta-k}b^k\binom{\gamma+\alpha-k-2}{\alpha-1-k}\Big(1+\frac ca\Big)^{-\gamma-\alpha+k+1}c^{\alpha-1-k}=\\[5pt]&=
\frac{v^{\alpha-1}}{a(1+u)^{\beta}(1+v)^{\alpha-1+\gamma}}\sum_{k=0}^{\alpha-1}\binom{\beta+k-1}k\binom{\gamma+\alpha-k-2}{\alpha-1-k}\Big(\frac{u}{1+u}\Big)^k\Big(\frac{1+v}{v}\Big)^{k}.
\end{align*}
Write
\[
B_k=\binom{\beta+k-1}{k}\binom{\gamma+\alpha-k-2}{\alpha-1-k}
\]
and
\[
r=r(x)=\frac{u(1+v)}{v(1+u)}.
\]
Then, by setting
\[
Q(r):=\sum_{k=0}^{\alpha-1}B_kr^k,
\]
we obtain
\begin{equation}\label{eq:S-three-values}
S(x)=
\frac{v^{\alpha-1}}{a(1+u)^\beta(1+v)^{\alpha-1+\gamma}}Q(r(x)).
\end{equation}
At $c=0$, equivalently at $x=0$, the value of $S(x)$ is understood by continuity.

\smallskip

The proof is based on studying the polynomial $Q(r)$. A recurrence for the coefficients yields a second-order differential equation for $Q(r)$; after taking a logarithmic derivative, this gives a simple criterion for the second derivative of $S(x)$ at its critical points.

\begin{lemma}\label{lem:second-derivative-three-values}
Assume $c>0$. The function $S(x)$ defined in \eqref{eq:S-three-values} satisfies the following properties:

\begin{enumerate}[label=\emph{\roman*)},leftmargin=*,itemsep=10pt]
\item\label{item:propI} $S'(0)=0$ and $S''(0)>0$.
\item\label{item:propII} At every $x\in\, ]0,\frac{\alpha}{\beta}[$ with $S'(x)=0$, one has
\begin{equation}\label{eq:Ssecond-P}
S''(x)=\Lambda(x)P(x),
\end{equation}
where 
\[
\Lambda(x)=S(x)\frac{\alpha\beta\gamma a^4}{m^4uv}>0
\]
and
\begin{equation}\label{eq:Pdef}
P(x)=2\beta\gamma(\beta-\gamma)x^3-3\beta\gamma(3\alpha+m)x^2-3\alpha(n+1)(\beta-\gamma)x+\alpha(n+1)(\alpha+2m).
\end{equation}
\item\label{item:propIII} If $x\in\, ]0,\frac{\alpha}{\beta}[$ satisfies $S'(x)=P(x)=0$, then
\begin{equation}\label{eq:Sthird-P}
S'''(x)=\Lambda(x)P'(x).
\end{equation}
\end{enumerate}
\end{lemma}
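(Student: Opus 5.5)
We will work with the logarithm $L(x)=\ln S(x)$. Using \eqref{eq:S-three-values},
\[
L=(\alpha-1)\ln v-\tfrac12\ln a^2-\beta\ln(1+u)-(\alpha-1+\gamma)\ln(1+v)+\ln Q(r).
\]
At a critical point $S'=0$ we have $S''=SL''$, and if moreover $S''=0$ then $S'''=SL'''$. So all three items reduce to computing $L'$, $L''$ and $L'''$ at the relevant points. The derivatives of $u,v,a^2$ are linear or simple by \eqref{eq:uvaD}. The only non-elementary term is $\ln Q(r(x))$, so the key step is to find a differential equation for $Q$.

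From the explicit form of $B_k$, the ratio $B_{k+1}/B_k=\frac{(\beta+k)(\alpha-1-k)}{(k+1)(\gamma+\alpha-k-2)}$ is rational in $k$. This shows $Q$ is a (terminating) Gauss hypergeometric polynomial ${}_2F_1(-(\alpha-1),\beta;\,2-\alpha-\gamma;\,r)$ up to a constant. Hence $Q$ satisfies
\[
r(1-r)Q''+\big[c_0-(a_0+b_0+1)r\big]Q'-a_0b_0Q=0
\]
with the corresponding parameters. Writing $w=Q'/Q$, this becomes a Riccati equation
\[
w'=-w^2-\frac{c_0-(a_0+b_0+1)r}{r(1-r)}\,w+\frac{a_0b_0}{r(1-r)}.
\]
The plan has four steps.

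\textbf{Step 1.} Express $L'$ as $E_1(x)+w(r)\,r'(x)$, where $E_1$ is an explicit rational function of $x$.

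\textbf{Step 2.} At a critical point, solve $L'=0$ for $w=-E_1/r'$.

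\textbf{Step 3.} Differentiate again and use the Riccati equation to eliminate $w'$. Then substitute the value of $w$ from Step 2. This writes $L''$ at a critical point as a rational function of $x$ alone.

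\textbf{Step 4.} Substitute $u,v,a^2$ from \eqref{eq:uva}, together with $r=\frac{u(1+v)}{v(1+u)}$, and simplify. The target is the factorised form $\frac{\alpha\beta\gamma a^4}{m^4uv}P(x)$. Positivity of $\Lambda$ is then clear, since $u,v,a,S>0$ for $c>0$.

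For item \ref{item:propIII}, differentiate the identity $L''=E_2(x)+E_3(x)\,(w+E_1/r')$. This identity holds for all $x$, and its second term vanishes exactly at critical points. Differentiating, the correction term contributes $E_3\,(L'/r')'$. At a point where $S'=0$ it equals $E_3L''/r'$, which is zero because $P=0$ forces $L''=0$. Hence $L'''=\frac{d}{dx}\big[\frac{\alpha\beta\gamma a^4}{m^4uv}P\big]$. Since $P=0$ there, the product rule leaves only the $P'$ term, which gives \eqref{eq:Sthird-P}. To make this rigorous, the identity in Step 3 should be written globally as $L''=\Lambda P/S+R(x)\,L'$, with explicit $R$, rather than only at critical points.

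For item \ref{item:propI}, $x=0$ means $u=v=\alpha/m$, so $r=1$ and the Riccati equation is singular there. Instead, I would use symmetry. At $x=0$ the vector has two values, and the two-value family is a critical point with respect to perturbations that split the $b$-block; this follows from permutation symmetry of the $b$ and $c$ coordinates. Alternatively, one can use $Q(1)=\sum_k B_k=\binom{\alpha+m-2}{\alpha-1}$ (Vandermonde) and compute $S'(0)$ directly by expanding to first order in $x$. For $S''(0)>0$, the same Taylor expansion to second order should produce $P(0)=\alpha(n+1)(\alpha+2m)>0$ times a positive factor, consistent with item \ref{item:propII} by continuity. I expect this computation to be the hardest step, together with the algebraic simplification in Step 4 that collapses everything into the cubic $P$. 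I would first carry it out by symbolic computation to confirm the exact normalising factor $\frac{\alpha\beta\gamma a^4}{m^4uv}$, and then present the hand verification.
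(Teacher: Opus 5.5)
\textbf{Items ii) and iii).} Your plan here is the paper's argument in different notation. The paper also derives the hypergeometric equation \eqref{eq:Q-ode} from the ratio $B_{k+1}/B_k$ and passes to the Riccati equation for $M=rQ'/Q$ (your $rw$). It then establishes the global identity $D'=CP+BD-\varrho D^2$ for $D=L'/\varrho$. This is exactly your $L''=\Lambda P/S+R\,L'$ with $R=\varrho'/\varrho+B-L'$. Your intermediate form $E_2+E_3(w+E_1/r')$ is not quite right: the $-w^2$ term in the Riccati equation makes $L''$ quadratic in $w$. The missing $-L'^2$ term is harmless for iii), since its derivative $-2L'L''$ vanishes at critical points.

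\textbf{The gap: $S''(0)>0$ in item i).} Your symmetry argument does give $S'(0)=0$. By \eqref{eq:uvaD}, $a'(0)=0$, so $\mathbf a'(0)$ is supported on the merged block of $m$ equal coordinates. There it sums to $a(\beta u'+\gamma v')=0$, while $\nabla\sigma(\mathbf a(0))$ is constant on that block.

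For the second derivative, however, you only say that a Taylor expansion ``should produce'' a positive multiple of $P(0)$, ``consistent with item ii) by continuity''. That heuristic is false: in fact $S''(0)=\Lambda(0)P(0)/(m+1)$. For example, take $\alpha=2$, $\beta=\gamma=1$. Then $S(x)\propto(8-x^2)\sqrt{8+x^2}\,(16-x^2)^{-2}$, so $(\ln S)''(0)=\frac18$, whereas $\Lambda(0)P(0)/S(0)=\frac38$. The reason is that $R$ inherits the term $-m/(r-1)$ from the Riccati equation. Since $r-1=x/(v(1+u))$, we get $R(x)=-m/x+O(1)$, and so $R L'\to -mL''(0)\neq 0$ as $x\to0^+$.

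\textbf{The missing idea.} The paper uses the fact that the Riccati form is singular at $r=1$ but the polynomial equation \eqref{eq:Q-ode} is not. Evaluating it and its $r$-derivative at $r=1$ gives $mQ'(1)=(\alpha-1)\beta Q(1)$ and $(m+1)Q''(1)=(\alpha-2)(\beta+1)Q'(1)$. This is precisely the data a second-order expansion at $x=0$ requires; Vandermonde for $Q(1)$ alone is not enough. It yields $S''(0)/S(0)=\frac{\beta\gamma(\alpha+2m)}{\alpha^2(n+1)(m+1)}>0$.

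Alternatively, your continuity idea can be repaired by keeping track of the pole. Letting $x\to0^+$ in the global identity gives $(m+1)L''(0)=\Lambda(0)P(0)/S(0)>0$.
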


\begin{proof}
For $0\leq k\leq\alpha-2$, the coefficients $B_k$ satisfy
\[
\frac{B_{k+1}}{B_k}
=
\frac{(\beta+k)(\alpha-1-k)}
{(k+1)(\alpha+\gamma-2-k)} \, ,
\]
and hence,
\[
-(k+1)kB_{k+1}+k(k-1)B_k=(\alpha-\beta-2)kB_k-(\alpha+\gamma-2)(k+1)B_{k+1}+(\alpha-1)\beta B_k.
\]
Multiplying by $r^k$ and summing over $k$ gives
\begin{equation}\label{eq:Q-ode}
r(r-1)Q''(r)=
\big((\alpha-\beta-2)r-\alpha-\gamma+2\bigr)Q'(r)
+
(\alpha-1)\beta Q(r),
\end{equation}
where differentiation is taken with respect to $r$. Introduce the logarithmic derivative
\[
M(r)
:=
\frac{\di}{\di\,\ln r}\ln Q(r)
=
\frac{rQ'(r)}{Q(r)}.
\]
This is well defined since $B_k>0$ and $r>0$ imply that $Q(r)>0$.
Differentiating $M$ with respect to $r$ gives
\[
rM'(r)
=
M(r)+\frac{r^2Q''(r)}{Q(r)}-M(r)^2.
\]
For $x>0$, we have $u>v$, and hence $(r-1)>0$.  Thus, from \eqref{eq:Q-ode}, we obtain
\begin{equation}\label{eq:M-ode}
rM'(r)
=\frac{\big((\alpha-\beta-1)r-\alpha-\gamma+1\big)M(r)+(\alpha-1)\beta r}{r-1}-M(r)^2
.
\end{equation}
For $x\in\,[0,\frac\alpha\beta[$, using \eqref{eq:uvaD}, set
\[
\varrho(x):=\big(\ln r(x)\big)'
=\frac1{ma^2uv(1+u)(1+v)}>0.
\]
Taking the logarithmic derivative of
\eqref{eq:S-three-values} gives
\begin{equation}\label{eq:lnS-D}
\big(\ln S(x)\big)'=\varrho(x)D(x)
\end{equation}
with
\[
D(x):=M(r(x))-N(x),
\]
where
\[
N(x)=-\beta\gamma x\,uva^4(1+u)(1+v)-\beta\gamma x\,a^2uv+\beta(\alpha-1)a^2u(1+u)
\]
is defined so that the logarithmic derivative of the prefactor of $Q(r)$ in \eqref{eq:S-three-values} equals $-\varrho(x) N(x)$. 

Differentiating $D$, we can substitute 
\[\frac\di{\di x}M(r(x))=D'(x)+N'(x)\]
into \eqref{eq:M-ode}. This yields the following differential equation for $D$:
\begin{equation}\label{eq:D-ode}
D'(x)
=
C(x) P(x)+B(x)D(x)-\varrho(x)D(x)^2,
\end{equation}
where 
\vspace{-5pt}
\[
C(x)=\frac{\alpha\beta\gamma}{m^3}a^6(1+u)(1+v)>0
\]
is a positive factor, $B(x)$ is a continuous function on $]0,\frac\alpha\beta[$, and $P$ is the polynomial defined in \eqref{eq:Pdef}. The precise form of $B$ will not be needed; its continuity is sufficient for differentiating the term $B(x)D(x)$ at the relevant point. 

We prove the required properties of $S$ using the differential equation \eqref{eq:D-ode}.  For ii), suppose that $S'(x)=0$ for some $x\in\,]0,\frac\alpha\beta[$. Since
$S(x)>0$ and $\varrho(x)>0$, \eqref{eq:lnS-D} implies that $D(x)=0$, and
\eqref{eq:D-ode} gives $D'(x)=C(x)P(x)$. Consequently, by \eqref{eq:lnS-D},
\[
\frac{S''(x)}{S(x)}
=(\ln S(x))''
=\varrho(x)D'(x)
=\varrho(x)C(x)P(x)
=\frac{\alpha\beta\gamma a^4}{m^4uv}P(x),
\]
proving \eqref{eq:Ssecond-P} and thus establishing property ii).

Next, we show property iii). Assume that $S'(x)=P(x)=0$; then, by \eqref{eq:lnS-D} and \eqref{eq:D-ode}, $D(x) = D'(x)=0$. Differentiating
\eqref{eq:D-ode}, at this point gives
$D''(x)=C(x)P'(x)$. Differentiating \eqref{eq:lnS-D} twice and using $D(x)=D'(x)=0$ yields
\[
S'''(x)=S(x)D''(x)=S(x)\varrho(x)C(x)P'(x)
=\Lambda(x)P'(x),
\]
which proves \eqref{eq:Sthird-P} and, thus, iii).

It remains to verify i). Accordingly, let $x=0$. Here, based on~\eqref{eq:uva}, 
$u=v=\frac\alpha m$, $r=1$, and $a^2=\frac m{\alpha(n+1)}$. Evaluating
\eqref{eq:Q-ode} at $r=1$ gives
\vspace{-5pt}
\[
mQ'(1)=(\alpha-1)\beta Q(1),
\]
and hence $M(1)=\frac{\beta(\alpha-1)}m=N(0)$. Thus, $D(0)=0$, so $S'(0)=0$.

To determine the second derivative, differentiate \eqref{eq:Q-ode}
with respect to $r$ and set $r=1$. Then,
\[
(m+1)Q''(1)=(\alpha-2)(\beta+1)Q'(1).
\]
Using this identity together with the formulae~\eqref{eq:uva} for $u,v,a$ and
their derivatives~\eqref{eq:uvaD}, a direct calculation gives
\vspace{-5pt}
\[
D'(0)
=M'(1)-N'(0)
=\frac{\beta\gamma(\alpha+2m)}
{\alpha m^2(m+1)}>0.
\]
Since $\varrho(0)=\frac{m^2}{\alpha(n+1)}>0$, we conclude from
\eqref{eq:lnS-D} that
\[
\frac{S''(0)}{S(0)}
=\varrho(0)D'(0)>0.
\]
Hence, $S''(0)>0$, completing the proof.
\end{proof}

\begin{lemma}\label{lem:P-three-values}
Assume $c>0$. The polynomial $P(x)$ defined by \eqref{eq:Pdef} is strictly concave on $[0,\frac{\alpha}{\beta}]$. Moreover, $P(0)>0$ and
\[
P\left(\frac{\alpha}{\beta}\right)
=-\frac{2\alpha(\alpha-\beta)(\alpha+\beta)m^2}{\beta^2}\,.
\]
Consequently, if $\alpha\leq\beta$ then $P(x)>0$ throughout the open interval $]0,\frac{\alpha}{\beta}[$, while for $\alpha>\beta$, $P(x)$ has a unique zero $x_* \in \,]0,\frac{\alpha}{\beta}[$ with $P(x)>0$ on $]0,x_*[$ and $P(x)<0$ on $]x_*,\frac{\alpha}{\beta}[$.
\end{lemma}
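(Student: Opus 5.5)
Since $P$ is an explicit cubic, the plan is direct verification. First, concavity. We have
\[
P''(x)=12\beta\gamma(\beta-\gamma)x-6\beta\gamma(3\alpha+m),
\]
which is affine in $x$, so it is enough to check that it is negative at both endpoints of $[0,\frac\alpha\beta]$. At $x=0$ it equals $-6\beta\gamma(3\alpha+m)<0$. At $x=\frac\alpha\beta$ it equals
\[
6\gamma\bigl(2\alpha(\beta-\gamma)-\beta(3\alpha+m)\bigr)=6\gamma\bigl(-\alpha\beta-2\alpha\gamma-\beta m\bigr)<0.
\]
Hence $P''<0$ on the closed interval. (If $\beta=\gamma$, $P''$ is a negative constant, and the same conclusion holds.)

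Second, the endpoint values. Clearly $P(0)=\alpha(n+1)(\alpha+2m)>0$. For $P(\frac\alpha\beta)$, I substitute $n+1=\alpha+m$ and $m=\beta+\gamma$, multiply by $\beta^2/\alpha$, and expand. The result is a polynomial identity in $\alpha,\beta,\gamma$, and I expect it to collapse to $-2(\alpha-\beta)(\alpha+\beta)m^2$. This bookkeeping is the only real obstacle. I would organize it by powers of $\gamma$: the $\gamma$-free part should reduce to $-2\beta^2(\alpha^2-\beta^2)$, then match the $\gamma$ and $\gamma^2$ coefficients against $-2(\alpha^2-\beta^2)(2\beta\gamma+\gamma^2)$. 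A quick consistency check is the case $\alpha=\beta$. Then $c=0$ at $x=1$, so the identity requires $P(1)=0$. Indeed
\[
2\beta\gamma(\beta-\gamma)-3\beta\gamma(4\beta+\gamma)-3\beta(2\beta+\gamma)(\beta-\gamma)+\beta(2\beta+\gamma)(3\beta+2\gamma)
\]
vanishes after expansion, which supports the formula.

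Third, the sign conclusions. A strictly concave function that is positive at $0$ has the following behaviour. If $P(\frac\alpha\beta)\ge 0$, which is the case $\alpha\le\beta$, then $P$ lies strictly above the chord between the endpoints, so $P>0$ on the open interval. If $P(\frac\alpha\beta)<0$, which is the case $\alpha>\beta$, the intermediate value theorem gives a zero $x_*$. It is unique because a concave function positive at $0$ can cross zero at most once going downward: after any zero $x_*$ with $P(0)>0$, concavity forces $P(x)<0$ for all $x>x_*$, since $P$ lies below the extension of its chord from $0$ to $x_*$. Together with positivity on $]0,x_*[$ from the chord argument, this gives the stated sign pattern.
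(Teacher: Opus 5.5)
Your proposal is correct and follows the paper's proof. You get strict concavity from the sign of the affine function $P''$, checking it at both endpoints of $[0,\frac\alpha\beta]$ where the paper splits into the cases $\beta\le\gamma$ and $\beta>\gamma$; the endpoint values come by direct substitution; and the sign pattern follows from strict concavity with $P(0)>0$. The bookkeeping you left as a plan does come out as you predicted, since $\frac{\beta^2}{\alpha}P(\frac\alpha\beta)=-2\alpha^2(\beta+\gamma)^2+2\beta^2(\beta+\gamma)^2=-2(\alpha^2-\beta^2)m^2$.
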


\begin{proof}
From \eqref{eq:Pdef},
\[
P''(x)=6\beta\gamma\bigl(2(\beta-\gamma)x-(3\alpha+m)\bigr).
\]
If $\beta\leq\gamma$, this is negative. If $\beta>\gamma$, then for $x\in[0,\frac\alpha\beta]$, $2(\beta-\gamma)x<2\alpha<3\alpha+m$, so again $P''(x)<0$. Thus, $P(x)$ is strictly concave. The endpoint values are obtained by direct substitution, while the stated sign pattern is an immediate consequence of concavity.
\end{proof}

\begin{proof}[Proof of Proposition~\ref{prop:critic3}]
By Lemma~\ref{lem:second-derivative-three-values},
$S'(0)=0$ and $S''(0)>0$, so $S'(x)>0$ for all sufficiently small
$x>0$. 

By Lemma~\ref{lem:P-three-values}, either $P$ is positive
throughout the admissible interval, or it has a unique zero $x_*$, with
$P(x)>0$ on $]0,x_*[$ and $P(x)<0$ on
$]x_*,\frac\alpha\beta[$.

As long as $P>0$, the function $S'$ cannot have a first zero: at such
a point $x_0$ one would have $S''(x_0)\leq0$, whereas
\eqref{eq:Ssecond-P} gives
\[
S''(x_0)=\Lambda(x_0)P(x_0)>0.
\]
Hence, $S'>0$ throughout the region where $P>0$. In particular, if
$x_*$ exists, then $S'(x)>0$ for $x\in\,]0,x_*[$. Moreover, $S'(x_*)$
cannot vanish, since then $S''(x_*)=0$ and
\eqref{eq:Sthird-P} would give
\[
S'''(x_*)=\Lambda(x_*)P'(x_*)<0,
\]
making $x_*$ a strict local maximum of $S'$ with value $0$, contrary
to the positivity of $S'$ immediately to its left. Thus, $S'(x_*)>0$.

Every interior critical point of $S$ therefore lies in the region
where $P<0$, and by \eqref{eq:Ssecond-P} it is a strict local maximum.
There can be at most one such critical point, since after $S'$ first
crosses from positive to negative, the next return to zero would
have to satisfy $S''\geq0$, whereas \eqref{eq:Ssecond-P} gives
$S''<0$ there. Hence, $S$ has no interior local minimum.

Since $S$ is continuous on $[0,\frac\alpha\beta]$, its minimum is attained
at an endpoint. The endpoint $x=0$ is the two-value case $b=c$, while
$x=\frac\alpha\beta$ is the boundary case $c=0$. This proves the
proposition.
\end{proof}

\section{Completion of the proof}\label{sec:completion}

\begin{proof}[Proof of Theorem~\ref{th:simplex}]
It is enough, by \eqref{eq:Volume-sigma}, to prove
\[
\sigma(\mbf a)\geq \left(\frac{n}{n+1}\right)^{n-\frac12}
\]
for every unit vector $\mbf a \in \R^{n+1}$ with coordinate sum 0. 
We argue by induction on $n$. By a simple calculation based on \eqref{eq:2vars}, the case $n=1$ is immediate. Suppose first that $\mbf a$ has $k\geq1$ zero coordinates, and set $d=n+1-k$. Deleting the zero coordinates gives a unit vector $\mbf b\in\R^d$ with coordinate sum zero and $\sigma(\mbf a)=\sigma(\mbf b)$. By induction,
\[
\sigma(\mbf a)\geq\left(\frac{d-1}{d}\right)^{d-\frac32}
>\left(\frac{n}{n+1}\right)^{n-\frac12},
\]
because $h(t):=\big(\frac{t-1}t\big)^{t-\frac32}$ is strictly decreasing for $t>1$. Indeed,
\[
(\ln h)'(t)=-\ln\left(1+\frac1{t-1}\right)+\frac{t-\frac32}{t(t-1)}<0,
\]
where we used $\ln\big(1+\frac1{t-1}\big)>\frac1t$.

It remains to consider vectors with no zero coordinates. Since the set of admissible unit vectors is compact and $V$ is continuous, a minimiser of $\sigma(\mbf{a})$ exists. Such a vector is a constrained critical point of $\sigma(\mbf a )$, so Theorem~\ref{th:critic} shows that it assumes at most three distinct values. Three distinct values are impossible for a minimiser by Proposition~\ref{prop:critic3}. Therefore, the minimum is attained when $\mbf a$ has two distinct coordinates, and Proposition~\ref{prop:critic2} gives the desired bound. Moreover, its equality statement, together with the strict inequality in the zero-coordinate case, shows that the minimum is attained exactly for the normal vectors obtained from $\mbf a_{\min}$ by permuting the coordinates and multiplying by $-1$.
\end{proof}

\section{Acknowledgements}
The authors are sincerely grateful to Keith Ball for his generous support and encouragement throughout this project. They are also indebted to Hermann König for valuable advice. 

The first named author began this research while at Université Paul Sabatier in Toulouse, working with Franck Barthe, and is grateful for the  welcoming research environment.

\smallskip

Generative artificial intelligence was used in developing the arguments in Section~\ref{sec:critic3}.

\medskip

\footnotesize{Research of G.A. was partially supported by the ERC Advanced Grant "GeoScape" no.  882971, by Hungarian National Research (NKFIH) grants no. KKP-133819, 147145, 147544, and 150151, which has been implemented with the support provided by the Ministry of Culture and Innovation of Hungary from the National Research, Development and Innovation Fund, financed under the ADVANCED-24 funding scheme. 

Work of B.G. was supported by the Warwick Mathematics Institute Centre for Doctoral Training, and gratefully acknowledges funding by University of Warwick’s Chancellors' International Scholarship scheme.

This research was funded by the grant 2024-1.2.8-TÉT-IPARI-CN-2025-00011,
with the support provided by the National Research,
Development and Innovation Office from the National Research,
Development and Innovation Fund, and financed under the
2024-1.2.8-TÉT-IPARI-CN funding scheme.}

\bigskip     

\noindent
{\sc Gergely Ambrus}

\noindent
{\em Bolyai Institute, University of Szeged, Hungary, \\ and HUN-REN Alfréd Rényi Institute of Mathematics,  Budapest, Hungary}

\noindent
e-mail address: \texttt{ambrus@server.math.u-szeged.hu, ambrus@renyi.hu}

\medskip

\noindent
{\sc Barnabás Gárgyán}

\noindent
{\em  Mathematics Institute, University of Warwick, Coventry, United Kingdom, \\ and Bolyai Institute, University of Szeged, Hungary}

\noindent
e-mail address: \texttt{Barnabas.Gargyan@warwick.ac.uk}
\end{document}